\documentclass[11pt]{article}
\usepackage[margin=1in]{geometry}
\usepackage{amsmath,amssymb,amsthm,bbold}
\usepackage[numbers]{natbib}

\newcommand{\ep}{{\varepsilon}}

\newcommand{\e}{\varepsilon}
\newcommand{\diam}{\text{diam}}
\newcommand{\dist}{\text{dist}}

\usepackage{braket}

\newcommand{\abs}[1]{\left\vert #1 \right\vert}
\newcommand{\norm}[1]{\left\Vert #1 \right\Vert}
\newcommand{\parens}[1]{\left( #1 \right)}

\theoremstyle{plain}
\newtheorem{theorem}{Theorem}[section]
\newtheorem{corollary}[theorem]{Corollary}

\newtheorem{lemma}[theorem]{Lemma}

\theoremstyle{remark}
\newtheorem*{remark}{Remark}

\theoremstyle{definition}
\newtheorem{definition}{Definition}[section]

\usepackage{color}
\definecolor{red}{rgb}{.8,0,0}
\definecolor{green}{rgb}{0,.7,0}
\definecolor{blue}{rgb}{0,0,.8}

\setlength\parindent{0pt}

\title{An {A}lon-{B}oppana theorem for powered graphs and \\ generalized {R}amanujan graphs}
\author{Emmanuel Abbe and Peter Ralli\footnote{EPFL.  This research was funded in part by NSF CAREER Award CCF-1552131, ARO grant W911NF-14-1-0094, NSF grant DMS-1811735}}
\date{}
\begin{document}

\maketitle

\abstract{The $r$-th power of a graph modifies a graph by connecting every vertex pair within distance $r$. This paper gives a generalization of the {A}lon-{B}oppana Theorem for the $r$-th power of graphs, including irregular graphs. This leads to a generalized notion of Ramanujan graphs, those for which the powered graph has a spectral gap matching the derived {A}lon-{B}oppana bound. In particular, we show that certain graphs that are not good expanders due to local irregularities, such as Erd\H{o}s-R\'enyi random graphs, become almost Ramanujan once powered. A different generalization of Ramanujan graphs can also be obtained from the nonbacktracking operator. We next argue that the powering operator gives a more robust notion than the latter: Sparse Erd\H{o}s-R\'enyi random graphs with an adversary modifying a subgraph of $\log(n)^{\ep}$ vertices are still almost Ramanujan in the powered sense, but not in the nonbacktracking sense. As an application, this gives robust community testing for different block models.}

\section{Introduction}
The Alon-Boppana Theorem implies that a family of $d$-regular $n$-graphs with adjacency matrix $A_n$ satisfies
\begin{align}
\lambda_2(A_n) \ge 2 \sqrt{d-1} - o_n(1).
\end{align}
A family of regular graphs is then called Ramanujan if it achieves this bound:
\begin{definition}
A family of $d$-regular $n$-graphs with adjacency matrix $A_n$ is {\it Ramanujan}, denoted here {\it A-Ramanujan}, if
\begin{align}
\lambda_2(A_n) \le 2 \sqrt{d-1}.
\end{align}
\end{definition}
Explicit constructions of such families were obtained in \cite{LPS,adam_1}, and it was shown in \cite{friedman} that random $d$-regular graphs $R_n$ are almost Ramanujan, in that
\begin{align}
\lambda_2(A_n) = 2 \sqrt{d-1} +  o_{n,\mathbb{P}}(1),
\end{align} 
where we use the notation $A_n=B_n+ o_{n,\mathbb{P}}(1)$ when $A_n-B_n$ tends to $0$ in probability when $n$ tends to infinity.
Obviously the above definitions are not directly relevant for irregular graphs. More specifically, Erd\H{o}s-R\'enyi (ER) random graphs with an expected degree $d$ will have their top two eigenvalues of order $\sqrt{\log(n)/\log\log(n)}$, due to eigenvectors localized on high-degree nodes, and therefore afford no spectral gap. Nonetheless, ER random graphs are similar to random $d$-regular graphs in various respects, e.g., their local neighborhoods for typical vertices are trees of either fixed or expected degree $d$. In particular, Lubotzky \cite{lubotzky_book} gives a definition of Ramanujan that generalizes to irregular graphs, where $G$ is Ramanujan if for every non-trivial eigenvalue $\lambda$ of $A(G)$, $\abs{\lambda}\leq \rho(\hat{G})$ where $\rho(\hat{G})$ is the spectral radius of the universal cover $\hat{G}$ of $G$. We observe that this definition does not fix the previously mentioned issue, as the spectral gap of $G$ may be in some sense maximally large given the universal cover, but this property does not overcome the problem of the corresponding eigenvectors simply isolating on high-degree nodes.
Thus one may wonder whether ER random graphs could also be good expanders, or even almost Ramanujan, if their local irregularity could be smoothed out. We will next discuss how to formalize and quantify such statements, and give motivating applications. \\ 

\noindent
{\bf Generalized Ramanujan: beyond the adjacency operator.} We start with a concrete example of a generalization of the Ramanujan property that can be obtained using the nonbacktracking operator of the graph. Given a graph $G$, the nonbacktracking matrix $B_G$ is defined by the matrix on the set of directed edges of the graph (i.e., its dimension is twice the number of edges), and for two directed edges $e=(e_1,e_2)$, $f=(f_1,f_2)$, $B_{e,f}=\mathbb{1}(e_2=f_1)\mathbb{1}(e_1\ne f_2)$. It was shown in \cite{terras} that for regular graphs, the Ramanujan property can be equivalently defined using the nonbacktracking spectral gap:
\begin{definition}
A family of $d$-regular $n$-graphs with nonbacktracking matrix $B_n$ is {\it B-Ramanujan}, if
\begin{align}
|\lambda_2(B_G)| \le \sqrt{d}, \label{Bram}
\end{align}
where a graph statisfying \eqref{Bram} is also said to satisfy the {\it graph Riemann hypothesis \cite{terras},} since the eigenvalues of the nonbacktracking operator are the reciprocal of the poles of the Ihara zeta function of the graph \cite{hashimoto, terras}.
\end{definition}
This definition is indeed equivalent to the former definition for regular gaphs.
\begin{lemma}\cite{terras}
For regular graphs, A-Ramanujan is equivalent to B-Ramanujan.
\end{lemma}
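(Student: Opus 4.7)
The plan is to deduce the equivalence from the Ihara--Bass determinant identity. For a $d$-regular graph $G$ on $n$ vertices with $m = nd/2$ edges, this identity reads
$$
\det(I - u B_G) \;=\; (1 - u^2)^{\,m-n}\,\det\bigl((1 + (d-1)u^2)I - u A_G\bigr),
$$
which, after inverting the nonzero roots, yields a correspondence between the nonzero eigenvalues of $B_G$ and (i) a pool of $\pm 1$ eigenvalues of total multiplicity $2(m-n)$ coming from the first factor, and (ii) for each eigenvalue $\lambda$ of $A_G$, the two roots $\mu_\pm$ of the quadratic $\mu^2 - \lambda\mu + (d-1) = 0$, obtained by substituting $u = 1/\mu$ into the second factor.

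With this correspondence in hand the equivalence reduces to a one-variable exercise. By Vieta the roots satisfy $\mu_+\mu_- = d-1$, and the discriminant is $\lambda^2 - 4(d-1)$. Hence when $|\lambda|\le 2\sqrt{d-1}$ the two roots are complex conjugates with $|\mu_\pm|^2 = \mu_+\overline{\mu_+} = \mu_+\mu_- = d-1$, whereas when $|\lambda|>2\sqrt{d-1}$ the roots are real of the same sign and their product is $d-1$, so the larger in modulus strictly exceeds $\sqrt{d-1}$. This is precisely the equivalence between the A-Ramanujan bound $|\lambda|\le 2\sqrt{d-1}$ on the non-trivial eigenvalues of $A_G$ and the corresponding bound $|\mu|\le\sqrt{d-1}$ on the non-trivial eigenvalues of $B_G$.

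To close, I would separate the trivial spectrum: the Perron value $\lambda=d$ of $A_G$ sends to $\mu\in\{d-1,1\}$ via the quadratic, matching the known top eigenvalue $d-1$ of $B_G$; the bipartite case $\lambda=-d$ is analogous; and the extra $\pm 1$ eigenvalues contributed by $(1-u^2)^{m-n}$ lie automatically within $\sqrt{d-1}$ as soon as $d\ge 2$, so they do not violate the B-Ramanujan bound. The only real input is the Ihara--Bass identity, so there is no serious analytic obstacle; the main work is verifying that identity and carefully bookkeeping trivial versus non-trivial eigenvalues together with their multiplicities.
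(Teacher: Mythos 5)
Your proof is correct and is essentially the argument behind the cited result: the paper itself gives no proof of this lemma (it is quoted from \cite{terras}), and the standard proof there is exactly your route through the Ihara--Bass identity $\det(I-uB_G)=(1-u^2)^{m-n}\det\bigl((1+(d-1)u^2)I-uA_G\bigr)$, the quadratic $\mu^2-\lambda\mu+(d-1)=0$, and Vieta's relations separating the trivial from the non-trivial spectrum. One bookkeeping remark: your argument yields the sharp equivalence ``$|\lambda|\le 2\sqrt{d-1}$ for non-trivial $A$-eigenvalues iff $|\mu|\le\sqrt{d-1}$ for non-trivial $B$-eigenvalues,'' whereas the paper's displayed B-Ramanujan condition is written with the looser threshold $\sqrt{d}$; your $\sqrt{d-1}$ version is the correct tight statement (the non-trivial $B$-eigenvalues in the Ramanujan case have modulus exactly $\sqrt{d-1}$ or $1$), so the discrepancy is a defect of the paper's phrasing rather than of your proof.
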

Further, it was shown in \cite{bordenave} that the B-Ramanujan definition extends more naturally to some irregular graphs than the A-Ramanujan definition, with the ER random graph being almost B-Ramanujan.
\begin{theorem}\cite{bordenave}
For a random ER graph, 
\begin{align}
|\lambda_2(B_n)| = \sqrt{\lambda_1(B_n)} + o_{n,\mathbb{P}}(1),
\end{align}
with $\sqrt{\lambda_1(B_n)} =  d + o_{n,\mathbb{P}}(1)$ in this case.
\end{theorem}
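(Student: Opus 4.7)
The plan is to combine a control of the top eigenvalue $\lambda_1(B_n)$ with a trace/moment-method bound on $|\lambda_2(B_n)|$; this is the route pioneered by Bordenave, Lelarge, and Massoulié, based on counting non-backtracking walks on the tangle-free part of the graph together with a careful isolation of the Perron vector.

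First I would pin down the leading eigenvalue, showing that $\lambda_1(B_n)$ concentrates around $d$. The lower bound follows from a Rayleigh-quotient test with a vector supported on directed edges inside the giant component, where each step of $B_n$ multiplies mass by the conditional non-backtracking out-degree; by Poisson thinning this is $d + o(1)$ on most edges. The matching upper bound comes from a comparison with the branching number of the Galton--Watson local weak limit of ER$(n,d/n)$, which is a Poisson$(d)$ tree whose non-backtracking operator has spectral radius $d$. Together with standard concentration this gives $\sqrt{\lambda_1(B_n)} \to \sqrt{d}$ in probability.

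Second, for the bound on $|\lambda_2(B_n)|$, I would apply a high-moment method at logarithmic scale $k \asymp \log n$. The key inequality is
\begin{align}
|\lambda_2(B_n)|^{2k} \le \mathrm{tr}\!\left[\bigl(B_n^k - \lambda_1 \phi_1 \phi_1^\ast\bigr)\bigl(B_n^k - \lambda_1 \phi_1 \phi_1^\ast\bigr)^\ast\right],
\end{align}
with $\phi_1$ the normalized top right eigenvector. Expanding the trace yields a sum over pairs of non-backtracking walks of length $k$ with matching endpoints; grouping these pairs by their combinatorial shape and then taking ER expectations reduces the count to an enumeration of multigraphs with prescribed excess. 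The dominant contribution, coming from tree-like pair shapes, is of order $n \cdot \lambda_1(B_n)^{k}$, so taking $2k$-th roots gives $|\lambda_2(B_n)| \le \sqrt{\lambda_1(B_n)} + o_{n,\mathbb{P}}(1)$, and the reverse inequality follows from a test-vector construction or an interlacing argument.

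The hard part is handling \emph{tangles}, that is, walks traversing atypically dense subgraphs. Such subgraphs support localized eigenvectors of $B_n$ with eigenvalue well above $\sqrt{\lambda_1}$, so the raw trace expansion overcounts. Bordenave's device is to introduce a tangle-free operator $B_n^{(\ell)}$ whose $(e,f)$-entry counts only those non-backtracking walks of length $\ell$ from $e$ to $f$ that visit no vertex twice within a length-$\ell$ window. One then has to show (i) $\| B_n^k - (B_n^{(\ell)})^k \|$ is negligible via a union bound over the rare tangle events, and (ii) the tangle-free trace is genuinely tree-like, so the walk enumeration attains the optimal $d^k$ rate. Balancing the parameters $k, \ell = \Theta(\log n)$ so that tangles stay rare while $k$ is large enough to separate $\lambda_2$ from $\lambda_1$ is the technical heart of the argument.
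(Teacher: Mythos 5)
This theorem is not proved in the paper at all; it is quoted as a known result of Bordenave, Lelarge and Massouli\'e \cite{bordenave}, and your outline is essentially a faithful summary of the strategy of that cited work (control of $\lambda_1(B_n)$ via the local Galton--Watson limit, a high-trace bound on the centered operator, and the tangle-free truncation $B_n^{(\ell)}$). Two cautions if you intend to flesh it out: first, since $B_n$ is not normal, the displayed inequality with the rank-one term $\lambda_1\phi_1\phi_1^\ast$ built from the right eigenvector alone is not justified; one needs either the oblique spectral projector involving both left and right Perron eigenvectors, or (as in \cite{bordenave}) a dedicated lemma bounding $|\lambda_2|$ by the norm of $B_n^{\ell}$ restricted to a hyperplane determined by an explicit approximate eigenvector, plus an error term measuring how far that vector is from being exact. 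Second, a single trace at scale $k\asymp\log n$ (i.e.\ a Frobenius-norm bound on $B_n^k-\lambda_1^k\phi_1\phi_1^\ast$) only yields $|\lambda_2|\le C\sqrt{d}$ for a constant $C>1$, because $n^{1/(2k)}$ does not tend to $1$ at that scale while tangle-freeness forbids taking $k\gg\log n$; the cited proof avoids this by using two scales, bounding $\mathrm{tr}\bigl[(\Delta^{(\ell)}\Delta^{(\ell)\ast})^m\bigr]$ with $\ell\asymp\log n$ and $m\asymp\log n/\log\log n$, so that the prefactor is polylogarithmic and disappears after taking $\ell$-th roots. With those repairs your sketch matches the argument the paper is citing.
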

Therefore, the nonbacktracking operator meets our objective to turn ER random graphs into almost Ramanujan graphs {\it in the nonbacktracking domain}. We next argue that this approach can be further improved.\\ 

\noindent
{\bf Symmetry and robustness.} The B-Ramanujan definition suffers from two drawbacks: (1) Complex spectrum: as opposed to $A$, the matrix $B$ is no longer symmetrical and thus has a complex spectrum. This makes some of the spectral  intuition more delicate, where expansions are in terms of directed walks (that do not backtrack) and where the Courant-Fisher theorem (connecting cuts to eigenvalues) requires the use of oriented path symmetry. In particular, a tight Alon-Boppana theorem in the nonbacktracking domain is not obtained in \cite{bordenave}. (2) Robustness: the nonbacktracking operator meets the objective of making ER random graphs almost Ramanujan, but this property is lost once one slightly deviates from such models. For instance, perturbing the ER graph by adding a clique of size $c=\Omega(\sqrt{d})$ edges already makes the perturbed graph far from $B$-Ramanujan.

Some solutions have been proposed for these issues. First, the Bethe-Hessian operator \cite{florent_bethe} has been shown to essentially act as a symmetrized version of the nonbacktracking operator, however it does not fix the robustness issue. In \cite{colin3cpam}, the generalized notion of $r$-nonbacktracking operator is used to gain generality in the proofs, but this is still nonsymmetrical and the complexity of the eigenpair computation increases significantly with $r$.

In \cite{powering_arxiv,ABRS19}, graph powering was proposed to address issues (1) and (2), testing robustness on a geometric block model, with parallel results in \cite{stephan2018robustness,stephan_colt} using a related operator based on graph distances. However, these papers no longer investigate the connection to Ramanujan graphs, which is explicit in the case of the nonbacktracking operator \cite{bordenave} (cf.\ previous paragraphs). \\

\noindent
{\bf This paper.} In this paper we consider the symmetric operator of graph powering, and investigate its robustness and extremal spectral gap properties. 
The $r$-th graph power $G^{(r)}$ of a graph $G$ modifies the graph by adding edges between any vertex pair at distance less or equal to $r$ \cite{ABRS19}. Equivalently, the adjacency matrix of $G^{(r)}$ is given by $A^{(r)}=\mathbb{1}((I+A)^r \ge 1)$, where the indicator function is applied point-wise to the matrix. We are typically interested in $r$ large but significantly less than the graph diameter (otherwise powering turns the graph into a complete graph). In general, a regular graph may no longer be regular once powered, so even for regular graphs, we cannot bound the spectral gap for powered graph simply by using the Alon-Boppana result with degree $d^r$. Nonetheless, if we take a regular graph of girth larger than $2r$, then the $r$-th power is regular and the Alon-Boppana Theorem gives the bound $\lambda_2 \ge 2\sqrt{d^r-1}$, so approximately $2 d^{r/2}$ for large $r$ or $d$. In fact, we shall see that random d-regular graphs have a second eigenvalue around $r d^{r/2}$ instead of $2 d^{r/2}$ for large $r$ \cite{ABRS19}.  For this reason one might conclude that in the powered domain, random $d$-regular graphs are not almost Ramanujan, creating contrast to the classical definition, and suggesting that powering may be misleading for generalizing Ramanujan graphs. The main result of this paper shows that this argument is false.  Instead we will observe that applying the general Alon-Boppana bound to powered graphs is suboptimal, since powered graphs are not arbitrary graphs - instead, they are {\it powers} of arbitrary graphs. 

We show an Alon-Boppana bound that applies to powers of (possibly irregular) graphs and which matches the scaling $r d^{r/2}$ for random $d$-regular graphs. Further, it is shown that both ER and random regular graphs have a comparably large and `optimal' spectral gap in the powered domain, i.e.,  they are almost P-Ramanujan, just as for B-Ramanujan. However, we show that this P-Ramanujan definition is more robust to local density variations (e.g., degrees) and adversaries than the B-Ramanujan definition: an adversary modifying a subgraph containing $\log(n)^{\ep}$ vertices in ER or RR cannot disrupt the P-Ramanujan property, while the B-Ramanujan property is lost after such a perturbation. 
We finish the introduction by motivating why such robust extensions are useful for spectral algorithms. \\ 

\noindent {\bf Detection from the spectrum.} Consider the problem of detecting the presence of a hidden structure in a graph, such as communities in the stochastic block model. This means that we want to distinguish between two cases, either the graph is drawn from ER$(n,d/n)$, or, on the other hand, it is the assembly of two independent ER$(n/2,a/n)$ subgraphs with a random bipartite graph connecting each pair of vertices across the groups independently with probability $b/n$. Denoting by $A$ the adjacency matrix of the graph, we have   
\begin{align}
H_0: A \sim \mu_0 = ER(n,d/n) \qquad H_1: A \sim \mu_1 = SBM(n,a/n,b/n) ,
\end{align} 
and we want to identify parameter regimes for which we can distinguish the two models with probability $1/2 + \Omega_n(1)$, or equivalently, for which $TV(\mu_0,\mu_1)=\Omega_n(1)$.
One can view the SBM adjacency matrix as an ER matrix $A$ perturbed as $A+Z$ where $Z$ adds/subtracts edges within/across clusters with the specified probabilities. If $(a+b)/2 \ne d$,  the average degree or edge density of the graph allows to distinguish the two models, so we consider the case where $(a+b)/2 = d$.
If we can see enough i.i.d.\ realizations of the graph (with vertex labels), the law of large numbers allows to approximate the expected adjacency matrix which is distinct in the two models: both have min-eigenvalues $0$ and max-eigenvalues $d$, but the SBM has an additional eigenvalue at $(a-b)/2$, which allows to distinguish the two models.  However, we are interested in the case where we see a single sample of the model. In this case, rather than a $0$ eigenvalue of large multiplicity, we have a distribution that spills over the informative eigenvalues, with high-degree nodes forming the largest eigenvalues in both models. So the spectrum of $A$ does not \textit{a priori} allow us to distinguish the two models, i.e., to decide correctly with probability $1/2 + \Omega_n(1)$ on the hypothesis.  

Rather than using the spectrum of $A$, we can use the cycle counts, as originally proposed in \cite{Mossel_SBM1}. This allows us to distinguish the models down to the optimal Kesten-Stigum (KS) threshold, i.e., $\lambda_2(SBM) > \sqrt{\lambda_1(SBM)}$, which reads $(a-b)/2 > \sqrt{(a+b)/2}$. One can also use spectral methods, not based on the adjacency matrix but on the nonbacktracking matrix \cite{bordenave}, which does not suffer from ER irregularities due to the weak Ramanujan property: its second eigenvalue is $\sqrt{(a-b)/2}+ o_{n,\mathbb{P}}(1)$ in the ER case, and $(a-b)/2+ o_{n,\mathbb{P}}(1)$ in the SBM case due to the community eigenvector. Thus the second eigenvalue of the nonbacktracking matrix allows us to solve the distinguishing problem down to the optimal KS threshold. 

The relevance of the almost-Ramanujan property in the B-domain is now clear: a large spectral gap for the null model (ER) leaves more room for the community signal (i.e., $\lambda_2$) to be visible in the SBM, and thus gives a broader range of parameters for which testing is solvable. 

Consider now the {\bf robust testing problem}, i.e., the same problem as above with the addition of an adversary that can modify adversarially edges incident to a fixed number $c$ of vertices before one sees the graph. It is not hard to check that a budget of $c = \Omega(a+b)$ suffices to disrupt the two previous methods based on cycle counts and the nonbacktracking operator. 
However, for graph powering and for $c = o\parens{\tfrac{\parens{(a-b)/2}^r}{\log(n)\sqrt{(a+b)/2}^{\,r-1}}}$, we will prove that that the ER model perturbed by such an adversary affords still a maximal spectral gap in the powered domain. This allows one to distinguish the models down to the KS threshold despite such adversaries --- See Corollary \ref{corr_ER_SBM}. For this  case, a similar result has recently been obtained in parallel work \cite{stephan2018robustness} for the SBM using a slightly different operator based on the distance matrix of the graph. \cite{stephan2018robustness} further covers the case of weak recovery. Here we instead add the case of the regular-SBM.  

\section{Results}
\subsection{Notations}

We will start by recalling some standard notations.  In a graph $G$, $\dist_G(v,w)$ is the graph distance metric, measuring the length (in edges) of the shortest $v-w$ walk in $G$.  If $G$ is a finite connected graph, $\diam(G)$ is the maximum graph distance between any pair of vertices.  If $G$ has $|V| = n$ the adjacency matrix $A(G)$ is an $n\times n$ matrix indexed by $V$ in which $A_{ij} = 1$ if $i\sim_G j$ and $0$ otherwise.  The eigenvalues of $A$ are $\lambda_1\geq\lambda_2\geq \lambda_3\dots$.

Let $G$ be a graph, $G$ may have self-loops but we do not allow repeated edges.  If $r\geq 1$, the $r$-th power of $G$ is $G^{(r)}$, the graph with vertex set $V(G)$ and an edge $\{x,y\}$ iff $\dist_G(x,y)\leq r$.  This definition was introduced in \cite{ABRS19}.  $A^{(r)}$ is the adjacency operator of $G^{(r)}$.  The graph power $G^{(r)}$ will by definition contain a self-loop at every vertex, it does not contain repeated edges. 

In order to model community behavior, we sample random graphs from the (balanced $2$-community) Stochastic Block Model:  a graph $G$ sampled from $SBM(n,a/n,b/n)$ is a random graph on $n$ vertices generated by the following two steps.  First, each vertex is put in community $X_1$ or $X_2$ uniformly and independently.  Second, for every pair of vertices $v,w$, $\{v,w\}$ is taken to be an edge with probability $a/n$ if $v$ and $w$ are in the same community and $b/n$ if not.  The $SBM$ is a generalization of the well-known Erd\H{o}s-R\'enyi graph $ER(n,d/n) := SBM(n,d/n,d/n)$.

We want to understand recovery of the $SBM$ communities: that is, given a graph pulled from $SBM(n,a/n,b/n)$, how well can we determine which vertices are in which community.  Because of symmetry we can never determine which community is $X_1$ as opposed to $X_2$, but the aim is to guess a community more accurately than a random guess would allow.  More formally, we solve the problem of \textit{weak recovery} if we can, with high probability, match more than $\tfrac{1}{2}+\ep$ of vertices to the correct community (up to a relabelling of the communities), where $\ep$ is some positive constant.  The \textit{Kesten-Stigum (KS) threshhold} $\tfrac{a-b}{2} = \sqrt{\tfrac{a+b}{2}}$ is a limit on weak recovery: when $\tfrac{a-b}{2} \leq \sqrt{\tfrac{a+b}{2}}$ weak recovery is not possible\cite{Mossel_SBM1}.

The Random $d$-regular graph on $n$ vertices is $RR(n,d)$.  The Regular Stochastic Block Model $RSBM(n,a,b)$ (as seen in \cite{Brito16}) is a random graph generated by first choosing uniformly at random a partition of the $n$ vertices into two equal communities, then putting a random $a$-regular graph on each community and a random $b$-regular bipartite graph between the communities.

\subsection{Alon-Boppana for powered graphs}

We investigate the maximum size of the spectral gap following graph powering.  We modify well-known methods of finding a lower bound on the second-largest eigenvalue in a graph in order to derive a version of the Alon-Boppana theorem for graph powering.

Recall that the Alon-Boppana result for (non-powered) $d$-regular graphs is  $$\lambda_2(A) \ge (1-o_{\diam(G)}(1)) 2 \sqrt{d-1}.$$  A Ramanujan graph is one for which the lower bound is tight, as first investigated in \cite{LPS}.

Friedman \cite{friedman} argued that $d$-regular random graphs are almost Ramanujan with high probability.  We will replicate this result under powering, arguing that with high probability, a $d$-regular random graph under powering is almost $r$-Ramanujan.

The Alon-Boppana like bound for powered graphs is:

\begin{theorem}\label{abp}
Let $\{G_n\}_{n \ge 1}$ be a sequence of graphs such that $\diam(G_n)=\omega(1)$, and $\{r_n\}_{n \ge 1}$ a sequence of positive integers such that $r_n=\e \cdot \diam(G_n)$. Then, 
\begin{align}
\lambda_2(G_n^{(r_n)}) \ge (1-o_\e(1)) (r_n+1) \hat{d}_{r_n}^{\, r_n/2}(G_n),
\end{align}
where
\begin{align}
 \hat{d}_r(G) &= \left( \frac{1}{r+1} \sum_{i=0}^r \sqrt{\delta^{(i)}(G)\delta^{(r-i)}(G)} \right)^{2/r},\\
 \delta^{(i)}(G) &= \min_{(x,y) \in E(G)} |\{ v \in V(G): \dist_G(x,v)=i,\dist_G(y,v)\ge i \}|.
\end{align}
\end{theorem}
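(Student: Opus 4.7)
The plan follows the classical Alon-Boppana strategy, adapted to the powered graph via a spherical-function-style test vector. At the top level, the argument is a two-dimensional Courant-Fischer: if we can exhibit two vectors $f_{e_1}, f_{e_2}$ with disjoint supports in $G_n^{(r_n)}$, each with Rayleigh quotient at least $(1-o_\e(1))(r_n+1)\hat{d}_{r_n}^{r_n/2}$, then $\lambda_2(G_n^{(r_n)})$ inherits the same lower bound. Such disjoint supports are obtained by selecting two edges $e_1, e_2$ at $G_n$-distance greater than $3 r_n$ (possible because $r_n = \e \cdot \diam(G_n)$ with $\e$ small, so $\diam(G_n)$ allows enough room along a diametral path), which ensures their radius-$r_n$ $G_n$-balls remain non-adjacent in $G_n^{(r_n)}$.

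For each edge $e = \{x, y\}$, the test vector $f_e$ is the powered analogue of the Alon-Boppana spherical function: $f_e(v) = \gamma_i$ for $v \in S_i^x := \{v : \dist_G(x, v) = i,\; \dist_G(y, v) \ge i\}$, zero elsewhere, where $\gamma_i$ generalizes the classical $(d-1)^{-i/2}$. The $y$-condition in $S_i^x$ forces $|S_i^x| \ge \delta^{(i)}(G_n)$ and provides the tree-like separation needed for the calculation. The regular tree case $T_d$ supplies the template: with $\gamma_i = (d-1)^{-i/2}$, a careful count shows that for a vertex $v$ at depth $i$ well inside the support, the sphere of radius $k$ around $v$ contributes approximately $(k+1)(d-1)^{k/2} f_e(v)$ to $(A^{(r)} f_e)(v)$ once grouped by the depths $i - k, i - k + 2, \ldots, i + k$; summing over $k = 0, 1, \ldots, r$ gives $(A^{(r)} f_e)(v) \approx (r+1)(d-1)^{r/2} f_e(v)$, and the Rayleigh quotient equals $(r+1)(d-1)^{r/2} = (r+1)\hat{d}_r^{r/2}$ since each depth level contributes equally to $\|f_e\|^2$.

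The main obstacle is transferring the regular-tree calculation to arbitrary irregular $G_n$ using only the lower bounds $|S_i^x| \ge \delta^{(i)}$. The plan is to group pairs $(v, w) \in S_i^x \times S_j^x$ contributing to the quadratic form $\langle f_e, A^{(r_n)} f_e \rangle$ according to their $G$-distance $k \le r_n$, combine the pair-count lower bounds with a Cauchy-Schwarz step of the form $|S_i^x||S_j^x| \ge \delta^{(i)} \delta^{(j)}$ combined with depth weights $\gamma_i \sim (\text{effective growth})^{-i/2}$, so that the contributions assemble into $\sum_{k=0}^{r_n} \sqrt{\delta^{(k)} \delta^{(r_n-k)}}$. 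Recognizing the identity $(r_n+1)\hat{d}_{r_n}^{r_n/2} = \sum_{i=0}^{r_n} \sqrt{\delta^{(i)} \delta^{(r_n-i)}}$ then closes the argument, and the $(1-o_\e(1))$ factor arises from boundary corrections (depths near $0$ and $r_n$ where the spherical expansion is truncated), which vanish as $\e \to 0$ since $r_n$ is a vanishing fraction of $\diam(G_n)$.
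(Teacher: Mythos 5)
Your outer layer (two balls at distance $>3r_n$, two-dimensional Courant--Fischer) is fine, and your regular-tree heuristic is the right picture, but the step you yourself label ``the main obstacle'' is where the entire proof lives, and the plan you give for it does not go through. The Cauchy--Schwarz step $|S_i^x||S_j^x|\ge \delta^{(i)}\delta^{(j)}$ bounds the number of pairs in $S_i^x\times S_j^x$, not the number of pairs at $G$-distance at most $r_n$: two vertices at depths $i$ and $j$ can be at distance up to $i+j\gg r_n$, so these products say nothing about how many entries of $A^{(r_n)}$ your test vector actually sees. The natural local repair --- for $v$ at depth $i$, walk up $a$ steps along a geodesic to $z_a$ and use the edge $(z_a,z_{a-1})$ to produce at least $\delta^{(b)}$ vertices $w$ with $\dist_G(z_a,w)=b$, $\dist_G(z_{a-1},w)\ge b$, $a+b\le r_n$ --- overcounts: already in the $d$-regular tree, every $w$ produced by the split $(a+1,b-1)$ also satisfies the defining conditions of the split $(a,b)$ (its distance to $z_a$ is exactly $b$ and to $z_{a-1}$ is $b+1$), so summing $\delta^{(b)}$ over splits is not a valid lower bound on the quadratic form. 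Making this count injective is precisely what the paper's canonical-path and tree-like-walk machinery does (Theorem~\ref{thm_treeseq}, and the observation that a move of type $m$ ending at $y$ excludes all moves of type $<m$ ending at $y$); your proposal has no substitute for it.

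There is also a structural reason a Rayleigh quotient against $A^{(r_n)}$ itself cannot certify the constant $(r_n+1)$ under the theorem's hypotheses: the denominator $\|f_e\|^2=\sum_i \gamma_i^2 |S_i^x|$ involves the true sphere sizes, and the theorem provides no upper bound on them --- $\delta^{(i)}$ is only a minimum over edges of the whole graph. If the actual growth near your chosen edge exceeds the $\delta$-rate (which can happen around every admissible edge simultaneously), the mass of $f_e$ concentrates on the outermost shells of the ball, where a vertex can only be paired ``upwards'' (splits with $b\le a$), and the per-vertex credit drops to roughly $\tfrac{r_n+1}{2}\hat d_{r_n}^{\,r_n/2}$; enlarging the ball or re-tuning $\gamma_i$ (which you can only calibrate to $\delta^{(i)}$, not to the unknown true growth) does not remove this, so your boundary error is not $o_\e(1)$. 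This is exactly why the paper avoids spread-out test vectors: it tests the two-point function $f_{xy}\perp f_1$ against $(A^{(r)})^{2k}$ (Theorem~\ref{thm_lambda}), so sphere sizes never enter the denominator, and the problem reduces to lower-bounding the closed-walk counts $t^{(r)}_{2k}$ at every vertex, done via the canonically constructed tree-like walks, the Dyck/Motzkin-type move sequences, and Lemma~\ref{lemma_binom} (Theorem~\ref{thm_tree}). To salvage your route you would either have to import that walk-counting machinery anyway or add growth upper bounds that the statement does not assume.
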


In \cite{powering_arxiv,ABRS19}, the authors (jointly with Boix and Sandon) prove that the quantity $\hat{d}$ is $d \pm o(d)$ with high probability in a $d$-regular random graph, for such graphs we get the following version of Theorem \ref{abp} 

\begin{theorem}\label{lemma_reg}
Let $G$ be a random $d$-regular graph and $r = \e \log(n)$, where $\e \log d <1/4$. Then, with high probability, 
\begin{align}\lambda_2(G^{(r)}) \geq (1-o(1))(r+1)\sqrt{d}^{\, r}.\end{align}
\end{theorem}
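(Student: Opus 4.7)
The plan is a direct application of Theorem~\ref{abp} combined with the cited estimate of $\hat{d}_r$ from \cite{powering_arxiv, ABRS19}. The only non-trivial checks are that the hypotheses of Theorem~\ref{abp} are satisfied for the chosen $r$, and that the quantitative estimate on $\hat d_r(G)$ is strong enough to survive being raised to the $r/2$ power.

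First, I would verify that $r = \e \log n$ can be written as $\e' \cdot \diam(G)$ for some constant $\e' \in (0,1)$ with high probability. It is classical that a random $d$-regular graph satisfies $\diam(G) = (1+o_{n,\mathbb P}(1)) \log_{d-1}(n)$. Since by hypothesis $\e \log d < 1/4$, the ratio $r/\diam(G)$ concentrates around $\e \log(d-1) \in (0, 1/4)$, so with high probability $r = \e' \diam(G)$ for a constant $\e' \in (0, 1/4)$; in particular $\diam(G) = \omega(1)$. Theorem~\ref{abp} then yields
\begin{align*}
\lambda_2(G^{(r)}) \ge (1 - o_{\e'}(1))\,(r+1)\, \hat d_r(G)^{r/2}.
\end{align*}

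Second, I would invoke the cited result from \cite{powering_arxiv, ABRS19} that $\hat d_r(G) = d(1 - o(1))$ with high probability on $RR(n,d)$. The underlying intuition is that for $r < \tfrac{1}{4}\log_d n$, the balls of radius $r$ around typical edges look like $d$-regular trees: the condition $\e \log d < 1/4$ gives $d^r = n^{\e \log d} = o(n^{1/4})$, so by a birthday-style argument collisions inside such a ball are unlikely. In the tree regime, $\delta^{(i)}(G)$ approaches the tree value $d(d-1)^{i-1}$, so $\hat d_r(G) \to d$. Substituting this into the Alon-Boppana-type bound above yields $\lambda_2(G^{(r)}) \ge (1 - o(1))(r+1) d^{r/2}$, which is exactly $(1-o(1))(r+1)\sqrt{d}^{\,r}$.

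The main obstacle I anticipate is quantitative: after raising $\hat d_r(G) = d(1 - \alpha_n)$ to the power $r/2$, the bound degrades to $d^{r/2}(1-\alpha_n)^{r/2}$, which remains $(1-o(1))d^{r/2}$ only if $\alpha_n \cdot r = o(1)$, i.e., $\alpha_n = o(1/\log n)$. So the cited ABRS estimate must be used in the sharper form $\hat d_r(G) = d - o(d/\log n)$, rather than merely $d(1-o(1))$. This is exactly where the restriction $\e \log d < 1/4$ is essential: in the tree-like regime the discrepancy between $\delta^{(i)}(G)$ and the ideal tree value is controlled by the probability of a collision in a radius-$r$ ball, which is polynomially small in $n$ and therefore far smaller than $1/\log n$. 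Feeding this sharper estimate into the bound from Theorem~\ref{abp} then gives the desired conclusion.
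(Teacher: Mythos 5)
Your overall route is the same as the paper's: Theorem~\ref{lemma_reg} is stated there as an immediate consequence of Theorem~\ref{abp} together with the estimate from \cite{powering_arxiv,ABRS19} that $\hat{d}_r$ is $d\pm o(d)$ with high probability for $RR(n,d)$, and your verification that $r=\e\log n$ is a constant fraction of $\diam(G)=(1+o(1))\log_{d-1}n$ is exactly the hypothesis check that makes this legitimate. To that extent the proposal matches the paper, and your observation that an error $\alpha_n$ in $\hat d_r$ gets amplified to $(1-\alpha_n)^{r/2}$, so that one needs $\alpha_n=o(1/\log n)$ rather than merely $\alpha_n=o(1)$, is a genuine point that the paper's one-line derivation glosses over.

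However, your proposed resolution of that quantitative issue contains a concrete error. The tree value of $\delta^{(i)}$ is not $d(d-1)^{i-1}$: the definition requires $\dist_G(y,v)\ge i$, which excludes the entire branch through $y$, so in a $d$-regular tree $\delta^{(i)}=(d-1)^i$, and in fact $\delta^{(i)}\le (d-1)^i$ for every edge of any $d$-regular graph (each counted vertex is reached by a length-$i$ nonbacktracking path whose first step avoids $y$). Consequently $\hat d_r\le d-1$ always, and the ``sharper form'' $\hat d_r = d-o(d/\log n)$ that your own analysis shows is needed can never hold at fixed $d$. What this route honestly delivers is $\lambda_2(G^{(r)})\ge (1-o(1))(r+1)(d-1)^{r/2}$, and $(d-1)^{r/2}=d^{r/2}e^{-(1+o(1))r/(2d)}$ is a $(1-o(1))$ fraction of $d^{r/2}$ only when $r=o(d)$ (or in a large-$d$ asymptotic, which is how the paper's own ``$\hat d = d\pm o(d)$'' should be read). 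So either state and prove the bound with $\sqrt{d-1}^{\,r}$, or make the regime $r=o(d)$ (equivalently a growing-$d$ assumption) explicit; as written, the step asserting $\hat d_r\to d$ with error $o(d/\log n)$ is false and is precisely the step your own consistency check says you cannot afford to lose.
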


We say that a graph $G$ is $r$-Ramanujan if the bound of Theorem \ref{abp} is tight, that is, if $\lambda_2(G^{(r)})\leq (1+o(1))(r+1)\hat{d}^{r/2}$.  We demonstrate that a class of Ramanujan graphs are also $r$-Ramanujan, and therefore that $r$-Ramanujan graphs exist.

\begin{lemma}\label{lemma_girth}
Let $G$ be a $d$-regular Ramanujan graph with girth $g$, and let $2r < \mathrm{girth}(G)$.  Then $G^{(r)}$ is $r$-Ramanujan, so that $\lambda_2(G^{(r)})= \parens{1+o_d(1)}(r+1) d^{r/2}$.  
\end{lemma}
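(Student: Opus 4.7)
The plan is to combine the lower bound of Theorem~\ref{abp} with a direct spectral upper bound that exploits the tree-like local structure of $G$. The lower bound will be almost immediate. Since $2r < g$, the $r$-neighborhood of any edge $(x,y)$ is a $d$-regular tree, so a direct count yields $\delta^{(0)}(G) = 1$ and $\delta^{(i)}(G) = (d-1)^i$ for $1 \leq i \leq r$ (namely the vertices at depth $i$ from $x$ that avoid the $y$-subtree). Every summand in the definition of $\hat d_r(G)$ is then equal to $(d-1)^{r/2}$, so $\hat d_r(G) = d-1$ and Theorem~\ref{abp} gives
\begin{align*}
\lambda_2(G^{(r)}) \geq (1 - o_\e(1))(r+1)(d-1)^{r/2} = (1 - o_d(1))(r+1) d^{r/2}.
\end{align*}

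For the matching upper bound, which is the main work, I would first realize $A^{(r)}$ as a polynomial in $A$. Define $N_k$ by $(N_k)_{xy} = \mathbb{1}(\dist_G(x,y) = k)$. Using uniqueness of shortest paths at distance $\leq r$ (a consequence of $2r < g$), I would verify by induction the identities $N_0 = I$, $N_1 = A$, $AN_1 = N_2 + dI$, and $A N_k = N_{k+1} + (d-1) N_{k-1}$ for $2 \leq k \leq r-1$. This yields $N_k = P_k(A)$ with $P_0 = 1$, $P_1(x) = x$, $P_2(x) = x^2 - d$, and the three-term recurrence $P_{k+1}(x) = x P_k(x) - (d-1) P_{k-1}(x)$ for $k \geq 2$. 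Hence $A^{(r)} = q(A)$ with $q(x) := \sum_{k=0}^r P_k(x)$, so the spectrum of $A^{(r)}$ is exactly $\{q(\lambda_i(A))\}$.

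The remaining task is to estimate $|q|$ on the Ramanujan interval $[-2\sqrt{d-1}, 2\sqrt{d-1}]$. I would substitute $x = 2\sqrt{d-1}\cos\theta$ and rescale to $\tilde P_k(\theta) := (d-1)^{-k/2} P_k(x)$, which converts the recurrence for $k \geq 2$ into the Chebyshev recurrence $\tilde P_{k+1} = 2\cos\theta\,\tilde P_k - \tilde P_{k-1}$. The initial data match the second-kind Chebyshev polynomials $U_k(\cos\theta) = \sin((k+1)\theta)/\sin\theta$ up to an $O(1/d)$ discrepancy, arising from the ``$-d$'' rather than ``$-(d-1)$'' in $P_2$. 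A standard perturbation of the linear recurrence should then give $\tilde P_k(\theta) = U_k(\cos\theta) + O(k/d)$, hence $|\tilde P_k(\theta)| \leq (k+1) + O(k/d)$ with both the bound and positivity saturated at $\theta = 0$. Summing a series whose last term $k=r$ dominates yields
\begin{align*}
\max_{|\lambda| \leq 2\sqrt{d-1}} |q(\lambda)| = q(2\sqrt{d-1}) = (1 + o_d(1))(r+1)(d-1)^{r/2}.
\end{align*}
Since $G$ is Ramanujan, every non-trivial eigenvalue of $A$ lies in $[-2\sqrt{d-1}, 2\sqrt{d-1}]$, which gives $\lambda_2(G^{(r)}) \leq (1 + o_d(1))(r+1) d^{r/2}$ and finishes the proof.

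The hard part will be confirming that the $O(1/d)$ initial-condition discrepancy at $P_2$ propagates only linearly in $k$ and therefore does not swamp the leading $(r+1)(d-1)^{r/2}$ term as one sums to $k = r$. A secondary subtlety is the bipartite case, where $A$ has a second trivial eigenvalue $-d$ and $q(-d)$ can be of order $d^r$; there one must adopt the standard Ramanujan convention of treating both $\pm d$ as trivial eigenvalues in order for the stated bound on $\lambda_2$ to hold.
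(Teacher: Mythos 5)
Your proposal is correct and takes essentially the same approach as the paper: the lower bound comes from Theorem~\ref{abp} with $\delta^{(i)}=(d-1)^i$, and the upper bound expresses $A^{(r)}$ as a polynomial in $A$ via the girth-based three-term recurrence and bounds that polynomial on the Ramanujan interval $[-2\sqrt{d-1},2\sqrt{d-1}]$. The paper handles what you call the hard part by solving the recurrence in closed trigonometric form, giving $\abs{p^{(r)}(x)}\leq (d-1)^{r/2}\parens{1+\tfrac{r}{\sqrt{d-1}}+r}$ directly instead of a Chebyshev perturbation argument, and (as you rightly flag) it implicitly ignores the bipartite case where $-d$ is an eigenvalue.
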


As first seen in the original construction of \cite{LPS}, there are families of Ramanujan graphs may have girth $\Theta(\log(n)/\log(d)$; it is straightforward to observe that this is an upper bound on girth.  Using such graphs, we can now construct $r$-Ramanujan powered graphs.

\subsection{Robustness of graph powering}

If a graph contains large cliques but otherwise appears to be randomly generated (such as by the stochastic block model), analysis of the leading eigenvectors will reliably identify those cliques rather than any communities that may exist.  We will investigate what happens to such graphs under powering.  Observe that a shortest path of length $r$ may have, at most, $1$ edge from any clique.  Intuitively, powering the graph means gives a graph whose edges are paths in $G$ that take edges mostly from the "expander part" of the graph with at most one edge from any large clique, and so we can observe the community behaviour.  In fact the results will be more general; we will consider any perturbation of an expander graph and not just the addition of a clique.

As we wish to study sparse random graphs which are locally tree-like, we find it useful to fist examine the properties of the tree.  In particular we will examine a graph that is a $d$-regular infinite tree with one $c$-clique.

\begin{definition}
Let $c,d$ be positive integers.  $T_{d,c}$ is the graph consisting of $c$ copies of the $d$-regular infinite tree $T_d$ that are attached by a $c$-clique containing exactly one vertex from each.
\end{definition}

Recall that the spectral radius of a graph $G = (V,E)$ with adjacency operator $A$ is $$\rho(G) = \sup_f \frac{\langle Af,f\rangle}{\langle f,f\rangle},$$ taken over all functions $f$ with $0 < \langle f,f\rangle < \infty$.

\begin{theorem}
$\rho(T_{d,c}) =\Theta (c + \sqrt{d} )$. 
\end{theorem}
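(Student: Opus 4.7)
The strategy is to prove matching bounds $\rho(T_{d,c}) = \Theta(c) + \Theta(\sqrt{d})$ by decomposing the adjacency operator into its ``clique part'' and its ``tree part'', treating the two contributions separately in both the upper and the lower direction. Label the $c$ tree copies $T_d^{(1)},\dots,T_d^{(c)}$ with special vertices $v_1^*,\dots,v_c^*$ forming the clique $K$; the edge set of $T_{d,c}$ is then the disjoint union of the tree edges within each $T_d^{(i)}$ and the $\binom{c}{2}$ clique edges on $K$. Write $A = A_T + A_K$ with $A_T$ supported on tree edges and $A_K$ on clique edges; both are bounded self-adjoint operators on $\ell^2(V)$ since $T_{d,c}$ has bounded degree $d+c-1$.

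\textbf{Lower bound.} I would feed two test functions into the Rayleigh quotient $\langle Af,f\rangle/\langle f,f\rangle$. First take $f = \mathbb{1}_K$: then $\langle f,f\rangle = c$ and every clique vertex contributes $c-1$ neighbours inside $K$, giving $\langle Af,f\rangle = c(c-1)$ and hence $\rho(T_{d,c}) \ge c-1$. Second, pick a sequence of finitely supported functions $f_n$ inside a single $T_d^{(1)}$ whose Rayleigh quotient in $T_d$ tends to $2\sqrt{d-1}$; because $f_n$ vanishes off $T_d^{(1)}$, the cross-branch contributions through the clique die and $\langle A f_n, f_n\rangle = \langle A_{T_d} f_n, f_n\rangle$, so $\rho(T_{d,c}) \ge \rho(T_d) = 2\sqrt{d-1}$. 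Combining gives $\rho(T_{d,c}) \ge \max(c-1,\,2\sqrt{d-1}) = \Omega(c+\sqrt{d})$.

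\textbf{Upper bound.} For self-adjoint operators, the spectral radius coincides with the operator norm, so $\rho(A) \le \|A_T\| + \|A_K\|$. The operator $A_K$ vanishes outside the finite set $K$ and restricts to the $c\times c$ matrix $J-I$ there, whose spectral radius is exactly $c-1$. The operator $A_T$ is block-diagonal across the disjoint subsets $V(T_d^{(i)})$, each block being the adjacency of $T_d$, so $\|A_T\| = \rho(T_d) = 2\sqrt{d-1}$. Therefore $\rho(T_{d,c}) \le (c-1) + 2\sqrt{d-1} = O(c+\sqrt{d})$. The only mild subtlety is that $T_{d,c}$ is infinite, so one must phrase everything at the level of bounded operators on $\ell^2(V)$ rather than eigenvalues, but this is routine given the uniform degree bound; no actual analytic difficulty arises.
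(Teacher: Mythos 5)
Your proposal is correct and follows essentially the same route as the paper: decompose the edge set into the clique edges and the tree edges, get the upper bound from subadditivity of the spectral radius (operator norm) under the decomposition $A = A_T + A_K$, and the lower bound from the spectral radius of each part separately (which you implement explicitly via Rayleigh-quotient test functions). Your write-up is in fact slightly more careful than the paper's, e.g.\ in using the correct value $2\sqrt{d-1}$ for $\rho(T_d)$ and in handling the infinite-graph/operator-norm formalities, but the underlying argument is the same.
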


\begin{proof}

Suppose $E$ is the disjoint union of edge sets $E_1$ and $E_2$.  Denote $G_i = (V,E_i)$ with adjacency matrix $A_i$, so that we can express $A = A_1 + A_2$.  It is straightforward to see that $\rho(G_i) \leq \rho(G) \leq \rho(G_1)+\rho(G_2)$, and thus $\rho(G) \approx \rho(G_1) + \rho(G_2)$.

For $T_{d,c}$, we will take $E_1$ to be the set of all edges within the $c$-clique and $E_2$ to be all other edges.  $G_2$ is a union of $c$ different $d$-regular trees.  It is well-known that $\rho(G_1) = c-1$ and $\rho(G_2) = 2\sqrt{d}$, the result follows.
\end{proof}

It is easy to see here that $c \approx \sqrt{d}$ is a threshold for $\rho(T_{d,c})$; if $c \gg \sqrt{d}$ then the size of the clique controls $\rho(T_{d,c})$, if $c \ll \sqrt{d}$ then the $\sqrt{d}$ term controls $\rho$ and the effect of the clique is hidden within the spectrum of $T_{d,c}$.

First, we will analyze the tree with added clique $T_{d,c}$.  When $c \gg \sqrt{d}$ then $\rho_2(T_{d,c})\approx c$.  This can be seen by counting the number of closed $k$-walks terminating at some vertex $x$ in the $K_c$; this number is $\approx (c-1)^k$ counting only those walks that stay within the $K_c$.  A similar strategy for obtaining a large number of $k$-walks in $T_{d,c}^{(r)}$ fails in that each step of such a walk represents an $\leq r$-path, that path can contain at most one edge of the $K_c$ and so it will either be short or travel into the "tree portion" of $T_{d,c}$.  In this way we "smooth over" the clique, obtaining the following result.

\begin{theorem} \label{thm_tr_cliques}
Let $G = T_{d,c}^{(r)}$.  Then $$\max\biggl\{(1-o(1)(r+1)\sqrt{d},c\biggr\}\sqrt{d}^{\,r-1}\leq \rho(G) \leq (1+o(1))\parens{c+(r+1)\sqrt{d}}\sqrt{d}^{\,r-1}.$$
\end{theorem}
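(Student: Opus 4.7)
Mimicking the decomposition used just above for $\rho(T_{d,c})$, split the edges of $G^{(r)}$ (with $G = T_{d,c}$) into $E_T \sqcup E_C$: $E_T$ consists of pairs whose shortest $G$-path lies in a single tree branch, and $E_C$ of pairs whose shortest $G$-path uses exactly one clique edge. This gives $A(G^{(r)}) = A_T + A_C$ with both summands symmetric and nonnegative, so $\rho(A) \le \rho(A_T) + \rho(A_C)$. The operator $A_T$ is block-diagonal with each block equal to (essentially) the adjacency of $T_d^{(r)}$, and a Perron--Frobenius computation using weights $(d-1)^{-\mathrm{depth}/2}$ yields $\rho(A_T) \le (1+o_d(1))(r+1)\sqrt{d}^{\,r}$, matching Theorem~\ref{abp} on a tree branch. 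For $A_C$, each cross-branch pair $\{v,w\}$ is parameterized by the clique edge it crosses together with depths $k_1, k_2$ of $v, w$ in their respective branches subject to $k_1 + k_2 \le r-1$; by the $S_c$ symmetry permuting branches, $A_C$ factors through the (clique vertex, depth) quotient, where it equals the tensor of $J_c - I$ (spectral radius $c-1$) with a Hankel-type matrix $H(k_1, k_2) = \sqrt{N_{k_1} N_{k_2}}\,\mathbb{1}[k_1 + k_2 \le r-1]$, $N_k = d(d-1)^{k-1}$. A sharp bound $\|H\|_{op} = (1+o(1))\sqrt{d}^{\,r-1}$ then gives $\rho(A_C) \le (1+o(1))\,c\,\sqrt{d}^{\,r-1}$.

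For the lower bound, the term $(1-o(1))(r+1)\sqrt{d}^{\,r}$ follows by embedding a large finite truncation of one branch as a subgraph of $G$ and invoking Theorem~\ref{abp} on its $r$-th power. For the $c\sqrt{d}^{\,r-1}$ term, use the branch-symmetric test function $f(v) = \sqrt{d}^{\,-\mathrm{depth}(v)}$ supported on depths $\le r-1$. A direct computation gives $\|f\|^2 \approx cr$ and $\langle A_C f, f\rangle \approx c(c-1)\sum_{s=0}^{r-1}(s+1)\sqrt{d}^{\,s} \approx c(c-1)\,r\,\sqrt{d}^{\,r-1}$ (dominated by the $s = r-1$ term), so the Rayleigh quotient is $(1-o(1))\,c\,\sqrt{d}^{\,r-1}$; since $\langle A_T f, f\rangle \ge 0$, the bound follows. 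Taking the maximum of the two lower bounds matches the stated left-hand side.

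The hardest step is the sharp estimate $\|H\|_{op} \le (1+o(1))\sqrt{d}^{\,r-1}$. Naive Hilbert--Schmidt bounds or the factorization $\|DLD\| \le \|D\|^2\|L\|$ each overshoot by a factor of $\sqrt{r}$ or $r$, which would inflate the clique-crossing contribution to $c\sqrt{r}\sqrt{d}^{\,r-1}$. Removing this spurious factor requires exploiting the anti-diagonal Hankel structure of $H$ (the entries with $k_1 + k_2 = r-1$ are all $\approx \sqrt{d}^{\,r-1}$, while all others are smaller by a factor of at least $\sqrt{d}$) and verifying that the profile $g(k) = \sqrt{d}^{-k}$ used in the lower bound is also a near-Perron eigenvector for the upper bound. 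Once this sharp estimate on $H$ is in place, everything else---the decomposition, the triangle inequality, and the two lower bounds via the embedded tree and branch symmetry---closely follows the template already established for $\rho(T_{d,c})$.
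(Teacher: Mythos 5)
Your overall architecture coincides with the paper's: split the edges of $T_{d,c}^{(r)}$ into within-branch pairs and clique-crossing pairs, bound $\rho \leq \rho(A_T)+\rho(A_C)$, and obtain the lower bound from the two families separately (the paper counts closed walks at a clique vertex; your Rayleigh-quotient test functions are an equivalent route, and your reduction of $A_C$ to $(J_c-I)\otimes H$ via branch symmetry is a clean repackaging of the paper's partition of the crossing edges into classes $F_q$ with $q=\dist_G(x,K_c)+\dist_G(y,K_c)$). However, two steps would fail as you state them. First, in your ``hardest step,'' the profile $g(k)=\sqrt{d}^{\,-k}$ is \emph{not} a near-Perron vector of $H$: one computes $(Hg)(k)/g(k)\approx (r-k)\,d^{k}$, which blows up at $k=r-1$, so a Schur test with this weight is off by much more than the $\sqrt{r}$ you are trying to remove. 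The estimate $\norm{H}\leq (1+o_d(1))\sqrt{d}^{\,r-1}$ is nevertheless true and elementary: either split $H$ into its anti-diagonals $H_s$ supported on $\{k_1+k_2=s\}$, each of norm equal to its maximal entry $\approx \sqrt{d}^{\,s}$, and sum the geometric series --- this is precisely the paper's bound $\norm{B_q}\leq c\sqrt{d}^{\,q}$ summed over $q$ --- or run the Schur test with the \emph{constant} weight, which is the true near-Perron profile: $(H\mathbb{1})(k_1)=\sqrt{N_{k_1}}\sum_{k_2\leq r-1-k_1}\sqrt{N_{k_2}}\leq (1+o_d(1))\sqrt{d}^{\,r-1}$ for every $k_1$.

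Second, the tree-part lower bound cannot be obtained by ``embedding a large finite truncation of one branch and invoking Theorem~\ref{abp} on its $r$-th power.'' The quantity $\hat{d}_r$ is a minimum over \emph{all} edges, and any finite truncation has boundary edges $(x,y)$ with $x$ a leaf, for which $\delta^{(i)}=0$ for all $i\geq 1$; hence $\hat{d}_r=0$ and Theorem~\ref{abp} returns a vacuous bound (moreover its hypotheses tie $r_n$ to $\diam(G_n)$, which does not match a fixed $r$). What is needed --- and what the paper does --- is the walk-counting bound of Theorem~\ref{thm_tree} applied at an interior vertex of the infinite branch, where the relevant counts are $(d-1)^i$ uniformly; since $T_d^{(r)}$ is a subgraph of $T_{d,c}^{(r)}$, this gives $\rho(T_{d,c}^{(r)})\geq \limsup_k t_{2k}(x)^{1/(2k)}\geq (1-o(1))(r+1)(d-1)^{r/2}$. (Your clique-term lower bound yields $(c-1)\sqrt{d}^{\,r-1}$ rather than $c\sqrt{d}^{\,r-1}$, but the paper's own walk count has the same slack, so this is immaterial.) With these two repairs your argument goes through and is essentially the paper's proof in a symmetrized form.
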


Having established these results for trees with a cluster added, we can apply the methods to the study of more general expander graphs.  Before dealing with the specific examples of random graphs, we will give some general results on how adding edges to a graph impacts the eigenvalues of $G^{(r)}$.

\begin{definition}
Let $G$ be a graph on vertex set $[n]$ and let $H$ be a graph whose vertex set is a subset of $[n]$.  Then $G+H$ is the graph with vertex set $[n]$ and satisfying the equation $E(G+H)\Delta E(G) = E(H)$, in other words, $G+H$ is obtained from $G$ by adding or removing all edges of $H$ as applicable.
\end{definition}

We will use the following simple theorem for bounding eigenvalues of graphs of the form $G+H$.

\begin{theorem} \label{thm_plus_graph_bound}
Let $k \geq 1$.  Then $\abs{\lambda_k(A_{G+H})-\lambda_k(A_G)}\leq \norm{H}$.
\end{theorem}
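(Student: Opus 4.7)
The plan is to recognize this statement as a direct application of Weyl's inequality for eigenvalues of Hermitian perturbations, with one small bookkeeping step to handle the fact that edges of $H$ may be either added to or removed from $G$.

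First, I would write $A_{G+H} = A_G + M$, where $M$ is the symmetric $n\times n$ matrix defined by $M_{uv} = +1$ if $\{u,v\}\in E(H)\setminus E(G)$, $M_{uv}=-1$ if $\{u,v\}\in E(H)\cap E(G)$, and $M_{uv}=0$ otherwise. Since $G+H$ is obtained from $G$ by toggling precisely the edges of $H$, this is immediate from the definition. Note that $M$ is symmetric, and $|M_{uv}| = (A_H)_{uv}$ for every pair $u,v$.

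Next, I would invoke Weyl's inequality for symmetric matrices: if $X$ and $X+Y$ are symmetric, then $|\lambda_k(X+Y) - \lambda_k(X)| \le \|Y\|_{op}$ for every $k$. Applied to $X = A_G$ and $Y = M$, this yields
\begin{equation*}
\abs{\lambda_k(A_{G+H}) - \lambda_k(A_G)} \le \|M\|_{op}.
\end{equation*}

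It remains to show $\|M\|_{op} \le \|A_H\|_{op} = \|H\|$. Let $v$ be a unit eigenvector of $M$ with $|Mv| = \|M\|_{op} |v|$, and let $|v|$ denote the vector of entrywise absolute values. For each $i$, the triangle inequality gives
\begin{equation*}
\|M\|_{op}\,|v_i| = \Big|\sum_j M_{ij} v_j\Big| \le \sum_j |M_{ij}|\,|v_j| = \sum_j (A_H)_{ij}\,|v_j| = (A_H |v|)_i,
\end{equation*}
so $A_H |v| \ge \|M\|_{op}\,|v|$ entrywise. Since $A_H$ is symmetric and nonnegative, the Rayleigh quotient yields $\|H\| = \|A_H\|_{op} \ge \langle A_H |v|, |v|\rangle/\langle |v|,|v|\rangle \ge \|M\|_{op}$, completing the proof.

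There is really no serious obstacle here; the statement is essentially Weyl plus the observation that a sign-modified adjacency matrix has spectral norm bounded by that of the original adjacency matrix. The only point requiring any care is that $M \neq A_H$ in general (because of the minus signs coming from removed edges), so one cannot quote Weyl with $Y = A_H$ directly without the dominating argument above.
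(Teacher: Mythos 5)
Your proof is correct and takes essentially the same route as the paper: Weyl's inequality applied to the symmetric perturbation $A_{G+H}-A_G$, combined with the observation that this perturbation agrees entrywise in absolute value with $A_H$, so its spectral norm is at most $\norm{A_H}$. The only difference is that you spell out the entrywise-domination and Rayleigh-quotient step that the paper records as a one-line observation.
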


\begin{proof}
Weyl's inequality states that $\abs{\lambda_k(A_{G+H})-\lambda_k(A_G)}\leq \norm{A_{G+H}-A_G}$.  Observe that entrywise $[A_H]_{ij} = \abs{[A_{G+H}-A_G]_{ij}}$, and so $\norm{A_{G+H}-A_G}\leq A_H$
\end{proof}

We can also prove a version of Theorem \ref{thm_plus_graph_bound} for graphs of the form $\parens{G+H}^{(r)}.$  This is the key result which we will later apply to various models of random graphs.

\begin{theorem}\label{thm_gen_clique_bound}
Let $G$ be a graph with $c < |V(G)|$, and let $H$ be a graph whose vertex set consists of at most $c$ elements of $V(G)$.  Define $D^{(i)}(G+H)$ to be the maximum degree in $G^{(i)}$ over all the vertices of $H$.  Then $$\abs{\lambda_k((G+H)^{(r)}) - \lambda_k(G^{(r)})} \leq \sum_{q = 0}^{r-1} c\max_i \sqrt{D^{(i)}D^{(q-i)}}.$$
\end{theorem}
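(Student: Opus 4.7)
The plan is to extend the strategy of Theorem~\ref{thm_plus_graph_bound}: apply Weyl's inequality first, and then sharply bound the spectral norm of the error matrix
\[
D := A_{(G+H)^{(r)}} - A_{G^{(r)}}.
\]
Weyl gives $\abs{\lambda_k((G+H)^{(r)}) - \lambda_k(G^{(r)})} \leq \norm{D}$ immediately, so the entire task reduces to showing $\norm{D} \leq c\sum_{q=0}^{r-1}\max_i \sqrt{D^{(i)}D^{(q-i)}}$.

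The key structural observation is that $D_{uv} \neq 0$ only when exactly one of $\dist_G(u,v)$ and $\dist_{G+H}(u,v)$ is at most $r$, which forces a shortest $u$-$v$ path of length $\leq r$ (in whichever graph realizes that distance) to traverse some edge of $H$. Since every edge of $H$ has both endpoints in $V(H)$, any such path has the form $u \leadsto h_1 \to h_2 \leadsto v$ for some $\{h_1,h_2\}\in E(H)$. Setting $h=h_1$ and $q = \dist(u,h_1) + \dist(h_2,v) \leq r-1$ yields the entrywise bound
\[
\abs{D_{uv}} \;\leq\; \sum_{h \in V(H)}\sum_{q=0}^{r-1}(M_{h,q})_{uv}, \qquad M_{h,q} := \sum_{i=0}^{q}\mathbb{1}_{S_i(h)}\mathbb{1}_{S_{q-i}(h)}^{T},
\]
where $S_j(h)$ denotes the sphere of radius $j$ around $h$. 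Now the pairwise disjointness of the spheres $\{S_i(h)\}_i$ makes the rank-one summands in $M_{h,q}$ have pairwise orthogonal row supports and pairwise orthogonal column supports, so $M_{h,q}$ is (after a permutation of coordinates) block-diagonal with rank-one blocks, and hence
\[
\norm{M_{h,q}} \;=\; \max_{0\leq i\leq q}\sqrt{\abs{S_i(h)}\cdot\abs{S_{q-i}(h)}} \;\leq\; \max_i \sqrt{D^{(i)}\,D^{(q-i)}},
\]
using $\abs{S_i(h)}\leq \deg_{G^{(i)}}(h) \leq D^{(i)}$ for $h \in V(H)$. Triangle inequality over $h \in V(H)$ (at most $c$ such) and over $q$ then gives the desired bound.

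The main obstacle is the entrywise estimate in the middle step: because $H$ can both add and remove edges, the witnessing shortest path may live in $G+H$, in $G$, or differently in both, and one must therefore choose $h$ and measure the two distances $\dist(u,h)$, $\dist(h,v)$ in a single, consistent graph for the bound $\abs{S_i(h)}\leq D^{(i)}$ to apply. Taking $h$ to be the endpoint of the \emph{first} $H$-edge along the witnessing path is the natural fix: the initial segment $u\leadsto h$ uses no edges of $H$ and is therefore a genuine $G$-path, while the tail $h_2\leadsto v$ either stays in $G\setminus E(H)$ or uses further $H$-edges that can be absorbed by iterating the same construction (each additional $H$-edge only shortens the range of $q$ by one). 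The remaining ingredients --- Weyl's inequality and the block-diagonal norm computation --- are routine, so this combinatorial bookkeeping is the only delicate part of the argument.
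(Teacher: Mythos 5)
Your overall architecture --- Weyl's inequality, then a stratification of the error matrix $D$ by how far its support sits from $H$, then a norm bound for each stratum --- is the same as the paper's, but the entrywise estimate at the heart of your argument is false as stated. When $D_{uv}\neq 0$, the witnessing path of length at most $r$ enters $V(H)$ at one vertex and leaves it at another: if $h_1$ is the first endpoint of the first $H$-edge along the path and $h_2$ is the last endpoint of the last $H$-edge, then the prefix and suffix avoid $E(H)$ and are therefore genuine $G$-paths, so what you control is $\dist_G(u,h_1)+\dist_G(h_2,v)\le r-1$ --- two distances to two \emph{different} vertices of $H$. Your matrix $M_{h,q}=\sum_i \mathbb{1}_{S_i(h)}\mathbb{1}_{S_{q-i}(h)}^{T}$ uses a single center $h$ for both factors (and the spheres must be $G$-spheres for $\abs{S_i(h)}\le D^{(i)}$ to hold), and in general there is no $h\in V(H)$ with $\dist_G(u,h)+\dist_G(h,v)\le r-1$. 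Concretely: let $G$ be a long path with endpoints $h_1,h_2$, let $H$ be the single edge $\{h_1,h_2\}$, let $u,v$ be the $G$-neighbors of $h_1,h_2$ respectively, and take $r=3$. Then $\{u,v\}\in E((G+H)^{(3)})\setminus E(G^{(3)})$, so $D_{uv}=1$, while $\sum_{h}\sum_{q\le 2}(M_{h,q})_{uv}=0$ because $\dist_G(h_1,v)$ and $\dist_G(h_2,u)$ are both large. Your proposed repair (take $h$ to be the first $H$-edge endpoint and ``iterate'') does not address this: iterating over further $H$-edges never produces control of $\dist_G(h,v)$ for the single center $h$, which is what $M_{h,q}$ requires.

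The gap is fixable, and the fix is exactly the bookkeeping the paper uses: stratify by the distance to the \emph{set} $V(H)$ rather than to a single vertex. Let $L_i=\{u:\dist_G(u,V(H))=i\}$; the first-edge/last-edge argument above shows $D_{uv}\neq 0$ forces $\dist_G(u,V(H))+\dist_G(v,V(H))\le r-1$, so the entries of $D$ in stratum $q$ are dominated by $\sum_{i}\mathbb{1}_{L_i}\mathbb{1}_{L_{q-i}}^{T}$. Since $\abs{L_i}\le c\,D^{(i)}$ (a union of at most $c$ balls of radius $i$ around vertices of $H$), your disjoint-support rank-one norm computation then goes through verbatim and gives norm at most $\max_i\sqrt{cD^{(i)}\cdot cD^{(q-i)}}=c\max_i\sqrt{D^{(i)}D^{(q-i)}}$; summing over $0\le q\le r-1$ yields the theorem. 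This set-distance stratification is precisely the paper's partition into edge classes $E_q$ with $\dist_G(x,H)+\dist_G(y,H)=q$; the paper then bounds each $\norm{B_q}$ with a weighted test function (a Schur-type estimate) rather than your block-diagonal identity, a difference that is only cosmetic. The essential missing idea in your write-up is replacing single-vertex spheres by level sets of $\dist_G(\cdot,V(H))$.
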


\begin{proof}
Let $A = A((G+H)^{(r)})$, $A_1 = A(G^{(r)})$, $A_2 = A-A_1$ and let $\overline{A}_2$ be the entrywise absolute value of $A_2$.    Weyl's inequality tells us that $\abs{\lambda_k(A) - \lambda_k(A_1)}\leq \norm{A_2}\leq \norm{\overline{A}_2}$.  The remainder of the proof is bounding $\norm{\overline{A}_2}$.  We will proceed similarly to the method of Theorem \ref{thm_tr_cliques}.

Observe that $\overline{A}_2$ is the adjacency matrix for a graph $G_2$.  If $x\sim_{G_2} y$, it must be the case that there are vertices $v,w\in V(H)$ so that $\dist_G(x,v) + \dist_G(y,w) \leq r-1$, i.e., there is a path of length at most $r$ between $x$ and $y$, and that path passes through $H$. Further partition $E(G_2)$ into $E_q:0\leq q\leq r-1$, where $\{x,y\}\in E_q$ if $\{x,y\}\in E(G_2)$ and $\dist_G(x,H) + \dist_G(y,H) = q.$  Let $B_q$ be the adjacency matrix of $E_q$, clearly $\norm{\overline{A}_2}\leq \sum_q \norm{B_q}$.

Now we will bound $\norm{B_q}$.  Suppose $\dist_G(x,H) = i$, then $x$ has at most $c\cdot D^{(q-i)}$ $E_q$-neighbors $y$, and no such neighbors if $i > q$.

Define $f_q$ so that $f_q(x) = \sqrt{D^{(q-i)}}$ if $\dist_G(x,H) = i$.  For such $x$,  $$\frac{B_qf_q(x)}{f_q(x)} \leq \frac{c\cdot D^{(q-i)}\sqrt{D^{(i)}}}{\sqrt{D^{(q-i)}}}= c\sqrt{D^{(i)}D^{(q-i)}}.$$  It follows that $\norm{B_q}\leq c\max_i \sqrt{D^{(i)}D^{(q-1)}}$, this completes the proof.
\end{proof}

\subsubsection{$SBM$ with perturbation}

We apply Theorem \ref{thm_gen_clique_bound} to the case of an  Erd\"{o}s-R\'{e}nyi random graph or a sparse SBM.

\begin{theorem} \label{thm_ER_SBM}
Let $G = SBM(n,a/n,b/n)+H$ where $|V(H)|\leq c$.  There is a universal constant $\alpha$ so that if $\sqrt{(a+b)/2} \leq (a-b)/2$ (the KS threshhold), then, independently of the choice of $H$, with high probability, $$(1-o(1))\parens{\tfrac{a-b}{2}}^r-c\log(n)^{\alpha}\sqrt{\tfrac{a+b}{2}}^{\,r-1} \leq  \lambda_2(G^{(r)})\leq c\log(n)^{\alpha}\sqrt{\tfrac{a+b}{2}}^{\,r-1}+(1+o(1))\parens{\tfrac{a-b}{2}}^r.$$

On the other hand, if $\sqrt{(a+b)/2} \leq (a-b)/2$, then with high probability
$$\parens{\sqrt{\tfrac{a+b}{2}}-c}\log(n)^{\alpha}\sqrt{\tfrac{a+b}{2}}^{\,r-1}\leq \lambda_2(G^{(r)})\leq \parens{c+\sqrt{\tfrac{a+b}{2}}}\log(n)^{\alpha}\sqrt{\tfrac{a+b}{2}}^{\,r-1}.$$

In particular, if $G = ER(n,d/n)+H$ (i.e., the $SBM$ with values $a = b = d$), then WHP $$\parens{\sqrt{d}-c}\log(n)^{\alpha}\sqrt{d}^{\,r-1}\leq \lambda_2(G^{(r)})\leq \parens{c+\sqrt{d}}\log(n)^{\alpha}\sqrt{d}^{\,r-1}.$$
\end{theorem}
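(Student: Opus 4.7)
The plan is to invoke Theorem \ref{thm_gen_clique_bound} with base graph $G_0 = SBM(n,a/n,b/n)$ and perturbation $H$, and then estimate $\lambda_2(G_0^{(r)})$ separately from the perturbation error. Theorem \ref{thm_gen_clique_bound} gives
\begin{align}
\abs{\lambda_2(G^{(r)}) - \lambda_2(G_0^{(r)})} \leq \sum_{q=0}^{r-1} c \max_i \sqrt{D^{(i)}(G_0)\,D^{(q-i)}(G_0)},
\end{align}
so the problem splits into (i) uniformly controlling the ball growth $D^{(i)}(G_0)$ around the at most $c$ vertices touched by $H$, and (ii) pinning down $\lambda_2$ of the unperturbed $r$-th power.

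For (i), I would invoke the standard sparse-graph concentration estimates for $SBM(n,a/n,b/n)$: with high probability, simultaneously for every vertex $v$ and every $i \leq r = O(\log n)$, the ball $|B_i(v)| \leq (\tfrac{a+b}{2})^i \log(n)^{\beta i}$ for some absolute constant $\beta$, the polylog factor coming from the fact that the maximum degree of a sparse ER/SBM graph is $\Theta(\log n / \log\log n)$ rather than $O(1)$. Plugging this into the previous display gives $\sqrt{D^{(i)} D^{(q-i)}} \leq \sqrt{\tfrac{a+b}{2}}^{\,q}\log(n)^{\beta q}$, and summing the (essentially geometric) series in $q$ produces a perturbation error of the form $c\log(n)^{\alpha}\sqrt{\tfrac{a+b}{2}}^{\,r-1}$ for a suitable absolute $\alpha$, exactly matching the error term in the theorem.

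For (ii), I would treat the two regimes separately. Above the Kesten--Stigum threshold $\sqrt{(a+b)/2} < (a-b)/2$, the community eigenvector contributes an eigenvalue of size $(1\pm o(1))\parens{\tfrac{a-b}{2}}^r$ to $G_0^{(r)}$: one shows, via a second-moment/path-counting argument on the number of length-$\leq r$ paths between $u$ and $v$ in $G_0$, that $A(G_0^{(r)})$ is close in its leading community-direction to the rank-one lift of the $\pm 1$ community indicator, carrying precisely this factor (the same mechanism underlying the nonbacktracking analysis of \cite{bordenave}, but now with a symmetric operator and with path lengths $\leq r$). At or below the KS threshold, the community signal is absorbed into the bulk and one gets a matching two-sided bound $\lambda_2(G_0^{(r)}) = \Theta(\sqrt{\tfrac{a+b}{2}}^{\,r}\log(n)^{\alpha})$ by combining the Alon--Boppana lower bound of Theorem \ref{abp} (whose $\hat{d}$ concentrates to $\tfrac{a+b}{2}$ in the sparse SBM, as in the regular case used in Theorem \ref{lemma_reg}) with an upper bound from a trace-moment / Ihara-type computation on $G_0^{(r)}$. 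Combining (i) and (ii) via the triangle inequality yields the two displayed bounds.

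The main obstacle, and where essentially all the work lies, is step (ii) in the super-KS regime: obtaining the \emph{sharp} constant $\parens{\tfrac{a-b}{2}}^r$ (and not just $O((\tfrac{a+b}{2})^{r/2})$) for $\lambda_2$ of the unperturbed powered SBM, since this requires both a tight upper bound on the bulk after the community eigenvalue is peeled off and a matching lower bound via a carefully chosen test vector — a path-count-weighted lift of the community indicator. Once this is in hand, the ER specialization is immediate by taking $a = b = d$, which collapses the community contribution and leaves only the $(c+\sqrt{d})\log(n)^{\alpha}\sqrt{d}^{\,r-1}$ envelope. Note that the additive error $c \log(n)^\alpha \sqrt{(a+b)/2}^{\,r-1}$ is smaller than the signal $((a-b)/2)^r$ precisely when $c = o\!\parens{\tfrac{((a-b)/2)^r}{\log(n)^\alpha \sqrt{(a+b)/2}^{\,r-1}}}$, which is exactly the robustness regime advertised in the introduction.
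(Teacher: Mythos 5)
Your overall skeleton coincides with the paper's: write $G = G' + H$ with $G' = SBM(n,a/n,b/n)$, apply Theorem \ref{thm_gen_clique_bound}, bound the ball growth $D^{(i)}$ around $V(H)$, and combine with the value of $\lambda_2(G'^{(r)})$. The paper does exactly this in a few lines, importing both probabilistic inputs from \cite{ABRS19}: Lemma 4.6 there gives $D^{(i)} \leq \log(n)\parens{\tfrac{a+b}{2}}^i$ with high probability, and Theorem 2.6 there gives $\lambda_2(G'^{(r)}) = \max\parens{\log(n)^{\alpha}\sqrt{\tfrac{a+b}{2}}^{\,r},\parens{\tfrac{a-b}{2}}^r}$. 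Your versions of these inputs have genuine problems. First, your ball-growth estimate $\abs{B_i(v)} \leq \parens{\tfrac{a+b}{2}}^i\log(n)^{\beta i}$ carries a polylogarithmic factor \emph{per step}; in the intended regime $r = \log(n)^{\gamma}$ the accumulated factor $\log(n)^{\beta r} = e^{\beta r\log\log n}$ dwarfs $\sqrt{(a+b)/2}^{\,r} = e^{\Theta(r)}$, so ``summing the essentially geometric series'' cannot produce an error of the form $c\log(n)^{\alpha}\sqrt{(a+b)/2}^{\,r-1}$ with a universal constant $\alpha$. What is needed (and what the paper cites) is the stronger uniform bound with a single $\log(n)$ factor for all $i\leq r$; the max-degree heuristic you invoke only suggests the weaker per-step bound, so this step must be argued, not asserted.

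Second, the core spectral input --- the value of $\lambda_2$ for the unperturbed powered SBM --- is only sketched, and one concrete piece of the sketch fails. In the sub-KS regime you propose to obtain the lower bound from Theorem \ref{abp}, claiming that $\hat{d}_r$ concentrates to $\tfrac{a+b}{2}$ for the sparse SBM ``as in the regular case'' of Theorem \ref{lemma_reg}. This is false: $\delta^{(i)}$ is a minimum over \emph{all} edges $(x,y)$, and a sparse ER/SBM graph with high probability contains degree-one vertices (indeed isolated edges), so already $\delta^{(1)} = 0$ and $\hat{d}_r$ degenerates; Theorem \ref{abp} gives nothing useful for such irregular graphs, which is precisely the irregularity issue motivating the paper. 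The paper's two-sided control of $\lambda_2(G'^{(r)})$ comes instead from the cited equality in Theorem 2.6 of \cite{ABRS19}; your super-KS path-counting and trace-moment plan for re-deriving it is plausible in outline but is left entirely as a sketch, and by your own admission it contains essentially all the work. So the proposal reproduces the paper's reduction correctly, but does not establish the two inputs it hinges on, and the specific route proposed for the sub-KS lower bound would not work.
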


\begin{remark}
Note that we are thinking of $r = \log(n)^{\gamma}$ where $\gamma$ is a constant.  If $c = \log(n)^{\ep}$, then in this result, we obtain the bound $\lambda_2(ER(n,d/n)^{(r)})\leq \log(n)^{\alpha}d^{r/2}$ for the original $ER$ graph and $\lambda_2(G^{(r)})\leq \log(n)^{\alpha+\ep}d^{r/2}$ for the perturbed graph.  Our Alon-Boppana result for powering states that (if the unpowered graph is $d$-regular) then $\lambda_2\geq (r+1)d^{r/2} = \log(n)^{\gamma}d^{r/2}$.  Because the upper bounds for $\lambda_2(ER(n,d/n)^{(r)})$ and $\lambda_2(G^{(r)})$ are tight up to a power of $\log(n)$ we say that those graphs are both almost Ramanujan.
\end{remark}

\begin{proof}
This is an application of Theorem \ref{thm_gen_clique_bound}.

Consider the case that $G = SBM(n,a/n,b/n)+H$, define $G' = SBM(n,a/n,b/n)$ so that $G = G'+H$.  By Lemma 4.6 of \cite{ABRS19}, in the high-probability case $D^{(i)} \leq \log(n)((a+b)/2)^i$, and so, because $\lambda_2(G'^{(r)}) = \max\parens{\log(n)^{\alpha}\sqrt{\tfrac{a+b}{2}}^{r},\parens{\tfrac{a-b}{2}}^r}$ by Theorem 2.6 of \cite{ABRS19}, the results are straightforward.
\end{proof}

We will use the following result to solve the distinguishability problem.  That is, suppose with equal probability either the $ER(n,\tfrac{a+b}{2n})$ or $SBM(n,a/n,b/n)$ random graph model is chosen, and then a graph $G$ is drawn from that model at random and perturbed to $G+H$ by an adversarial choice of $H$ with the constraint $|V(H)|\leq c$.  Then for what values of $a,b,c$ is it possible to guess with high probability which of the two models $G$ comes from?

\begin{corollary} \label{corr_ER_SBM}

Assume $c = o\parens{\tfrac{\parens{(a-b)/2}^r}{\log(n)\sqrt{(a+b)/2}^{\,r-1}}}.$  

Let $G = SBM(n,a/n,b/n)+H$.  Independently of the choice of $H$, with high probability $$\lambda_2(G^{(r)}) = (1\pm o(1)) \parens{\tfrac{a-b}{2}}^r.$$

Let $G = ER(n,\tfrac{a+b}{2}/n)+H$.  Independently of the choice of $G$, with high probability $$\lambda_2(G^{(r)}) = o( \parens{\tfrac{a-b}{2}}^r).$$  
\end{corollary}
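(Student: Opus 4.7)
The plan is to derive the corollary directly from the perturbation bound of Theorem \ref{thm_ER_SBM} by verifying that the stated budget on $c$ is exactly tuned to suppress the additive error term, leaving only the intrinsic $\lambda_2$ behavior of each unperturbed model. The hypothesis on $c$ rewrites as $c \log(n) \sqrt{(a+b)/2}^{\,r-1} = o\parens{((a-b)/2)^r}$; since $r$ is polylogarithmic in $n$ (as noted in the remark preceding the corollary), inflating the single $\log(n)$ to $\log(n)^{\alpha}$ costs only a constant power of $\log n$, so the perturbation term $c \log(n)^{\alpha}\sqrt{(a+b)/2}^{\,r-1}$ appearing in Theorem \ref{thm_ER_SBM} is also $o(((a-b)/2)^r)$.

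For the SBM branch, Theorem \ref{thm_ER_SBM} gives
\begin{align*}
(1-o(1))((a-b)/2)^r - c\log(n)^{\alpha}\sqrt{(a+b)/2}^{\,r-1} &\le \lambda_2(G^{(r)}) \\
&\le (1+o(1))((a-b)/2)^r + c\log(n)^{\alpha}\sqrt{(a+b)/2}^{\,r-1},
\end{align*}
and by the previous paragraph the additive error is $o(((a-b)/2)^r)$ uniformly in $H$, which gives $\lambda_2(G^{(r)}) = (1 \pm o(1))((a-b)/2)^r$ as claimed.

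For the ER branch I would apply the ER specialization of Theorem \ref{thm_ER_SBM} (with $d=(a+b)/2$), obtaining
$$\lambda_2(G^{(r)}) \le (c+\sqrt{d})\log(n)^{\alpha}\sqrt{d}^{\,r-1} = c\log(n)^{\alpha}\sqrt{d}^{\,r-1} + \log(n)^{\alpha}\sqrt{d}^{\,r}.$$
The first summand is controlled exactly as above. For the second, the assumption on $c$ can only be meaningful if its right-hand side diverges, which forces $((a-b)/(2\sqrt{d}))^{\,r-1} \gg \log(n)$; combined with the polylogarithmic growth of $r$, this beats any fixed power of $\log n$ and yields $\log(n)^{\alpha}\sqrt{d}^{\,r} = o(((a-b)/2)^r)$, so the ER upper bound is $o(((a-b)/2)^r)$.

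The main obstacle I anticipate is the bookkeeping of logarithmic factors between the two statements: one must verify that the single $\log(n)$ in the corollary's hypothesis absorbs the $\log(n)^{\alpha}$ of Theorem \ref{thm_ER_SBM}, which relies on $r$ being polylogarithmic and on the ratio $(a-b)/(2\sqrt{d})$ being bounded away from $1$ (both implicit in the remark and in the fact that the $c$-budget is nontrivial). Once these are in place the corollary is essentially a substitution, and the two cases yield separated orders of magnitude for $\lambda_2(G^{(r)})$, so thresholding on this eigenvalue solves the distinguishability problem.
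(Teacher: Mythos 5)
Your route is the same as the paper's: the paper's entire proof of this corollary is the single sentence that each statement is an application of Theorem \ref{thm_ER_SBM}, and your write-up is that application made explicit. However, one step of your bookkeeping has a genuine gap. From the hypothesis $c\log(n)\sqrt{(a+b)/2}^{\,r-1} = o\parens{((a-b)/2)^r}$ you cannot conclude that $c\log(n)^{\alpha}\sqrt{(a+b)/2}^{\,r-1} = o\parens{((a-b)/2)^r}$: the adversary may choose $c$ close to the stated budget, e.g.\ $c \asymp ((a-b)/2)^r\big/\parens{\log(n)^{3/2}\sqrt{(a+b)/2}^{\,r-1}}$ (which is admissible, and at least $1$ above the KS threshold with $r$ polylogarithmic), and then $c\log(n)^{\alpha}\sqrt{(a+b)/2}^{\,r-1} \asymp \log(n)^{\alpha-3/2}\parens{(a-b)/2}^r$, which is not $o\parens{((a-b)/2)^r}$ once $\alpha>3/2$. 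The superpolylogarithmic-ratio argument you invoke is sound for the $c$-free term $\log(n)^{\alpha}\sqrt{(a+b)/2}^{\,r}$ in the ER branch, where no adversarial choice can consume the slack, but it does not transfer to the $c$-dependent term: the hypothesis already spends its single factor of $\log(n)$ against that term and leaves no room for an extra $\log(n)^{\alpha-1}$.

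There are two clean repairs. Either strengthen the hypothesis to $c = o\parens{((a-b)/2)^r\big/\parens{\log(n)^{\alpha}\sqrt{(a+b)/2}^{\,r-1}}}$, matching Theorem \ref{thm_ER_SBM} as stated; or observe, by tracing the proof of Theorem \ref{thm_ER_SBM} (Theorem \ref{thm_gen_clique_bound} combined with the bound $D^{(i)}\leq \log(n)\parens{(a+b)/2}^i$ from Lemma 4.6 of \cite{ABRS19}), that the coefficient of $c$ in the perturbation term is only $O(\log(n))$ --- the higher power $\log(n)^{\alpha}$ is needed only for the eigenvalue bound of the unperturbed ER/SBM --- after which your argument goes through with the single $\log(n)$ in the hypothesis. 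You should also state explicitly that both branches assume $a,b$ fixed and strictly above the KS threshold, so that $\parens{\tfrac{a-b}{2}\big/\sqrt{\tfrac{a+b}{2}}}^{r}$ with $r=\log(n)^{\gamma}$ indeed dominates every fixed power of $\log(n)$; otherwise the displayed asymptotics for $\lambda_2(G^{(r)})$ in the SBM branch are false even for $H=\emptyset$.
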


The proof of each statement is just an application of Theorem \ref{thm_ER_SBM}.

\begin{remark}  

A common method of solving the distinguishability problem is by examining the number of $m$-cycles \cite{colin3}. In brief, the number of cycles is $~\tfrac{1}{2m}\parens{d^m \pm d^{m/2}}$ in an $ER$ graph and $~\tfrac{1}{2m}\parens{d^m + (\tfrac{a-b}{2})^m \pm d^{m/2}}$ in an $SBM$, so that the decision is possible up to the $KS$ threshhold. However this method is not robust to adversarial perturbation of the graph.  If the perturbation is a $c$-regular clique where $c>>d$, the number of $m$-cycles is $~\tfrac{1}{2m}\parens{c^m + d^2c^{m-2} \pm \sqrt{d^2c^{m-2}}}$ for both the $ER$ and $SBM$ random graph models, this makes the decision impossible.  But the method of graph powering lets us solve this decision problem even with the addition of much larger cliques.  This result is similar to one found in the parallel work of Stephan and Massouli\'e \cite{stephan2018robustness}, working with the distance matrix rather than $A^{(r)}$.
\end{remark}

\begin{remark}
Implicit in the result of Theorem \ref{thm_ER_SBM} is that if $G = SBM(n,a/n,b/n)+H$ under the hypotheses of Corrolary \ref{corr_ER_SBM}, then the second eigenvector $v_2$ of $G^{(r)}$ will approximate the second eigenvector of $SBM(n,a/n,b/n)^{(r)}$.  Theorem 2.6 of \cite{ABRS19} tells us that the second eigenvector of $SBM(n,a/n,b/n)^{(r)}$ is useful for weak recovery of the communities.  
\end{remark}

\subsubsection{$RSBM$ with perturbation}

We will now prove similar results to the $SBM$ and $ER$ case but restricted to randomly generated regular graphs.  The naive spectral method of computing $\lambda_2$ is useful up to the $KS$ threshold for the problem of deciding whether a graph comes from the $RR(n,\tfrac{a+b}{2})$ or $RSBM(n,a,b)$ regimes, using the guideline that $\lambda_2(RR)\approx \sqrt{\tfrac{a+b}{2}}$ and $\lambda_2(RSBM)\approx \tfrac{a-b}{2}$ with high probability.  This means powering is of less interest for regular graphs.  However, we include the discussion in order to understand to what extent the results on $(SBM+H)^{(r)}$ are controlled by the lack of regularity in the underlying graph $SBM+H$.

First, we will give the definition of a $d$-regular random graph with $c$-clique, and a related model for a regular graph with two communities.

\begin{definition}
Let $c = o(d)$.  Then the \textit{random regular graph with $c$-clique} $RR_c(n,d)$ has vertex set $[n]$ and is constructed by first placing a random $c$-clique and then (as in the construction of a standard $d$-regular graph) adding other edges at random until the graph is $d$-regular.

Let $c=o(\min{a,b})$.  Then the \textit{regularized $SBM$ with $c$-clique} $RSBM_c(n,a,b)$ is a random graph with vertex set $n$ constructed by the following process.  First, a random $c$-clique is placed.  Then, following as in the construction of the regular stochastic block model \cite{Brito16}, divide the graph into two communities $X_1,X_2$ of equal size.  Within each community, add edges independently and uniformly at random so that the communities each induce an $a$-regular graph.  Then add edges between the communities independently and uniformly until each vertex has exactly $b$ neighbors in the other community.
\end{definition}

Similarly to the tree and $ER(n,d/n)$, we observe a threshold when $c = \sqrt{d}$ for the second eigenvalue in regular graph models with cliques added.

\begin{theorem}
With high probability, $$\lambda_2(RR_c(n,d)) = \Theta (c+\sqrt{d}).$$

Also with high probability, $$\lambda_2(RSBM_c(n,a,b)) = \Theta (c + \sqrt{\tfrac{a+b}{2}} + \tfrac{a-b}{2}).$$

\end{theorem}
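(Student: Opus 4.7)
The plan is to combine the edge-decomposition strategy used in the proof of Theorem \ref{thm_tr_cliques} with the perturbation bound of Theorem \ref{thm_plus_graph_bound} and classical $d$-regular spectral facts (Alon-Boppana and Friedman's theorem). In each model I would write $A = A_K + A_R$, where $A_K$ is the adjacency matrix of the planted $c$-clique and $A_R$ is the rest. The clique contributes operator norm $\|A_K\| = c-1$ and an eigenvalue of magnitude $\approx c-1$ via a test vector supported on $K$, while $A_R$ resembles a random regular graph (respectively, a regular SBM) whose second eigenvalue is already well-understood.

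For $RR_c(n,d)$, the graph $A_R$ has degree $d$ at every vertex outside $K$ and degree $d-(c-1)$ at every vertex of $K$, so it can be coupled to an exact random $d$-regular graph $G^{*}$ via $O(c)$ edge swaps, producing a difference operator of norm $O(c)$. Friedman's theorem then gives $\lambda_2(A_R) \le 2\sqrt{d-1} + O(c) + o(1)$ with high probability, and Weyl's inequality yields $\lambda_2(A) \le \lambda_2(A_R) + \|A_K\| = O(\sqrt{d} + c)$. For the matching lower bound, $RR_c(n,d)$ is $d$-regular with diameter $\Omega(\log n)$, so the standard Alon-Boppana bound gives $\lambda_2 \ge 2\sqrt{d-1} - o(1)$. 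Moreover, the test vector $f = \mathbf{1}_K - (c/n)\mathbf{1}$ is orthogonal to the Perron eigenvector $\mathbf{1}$ and has Rayleigh quotient $c - 1 - o(1)$ by direct computation (edges inside $K$ contribute $\approx c(c-1)$ to the numerator, and the cross-edge contributions cancel up to lower-order terms). Taking the maximum of the two lower bounds yields $\lambda_2 = \Omega(c + \sqrt{d})$.

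For $RSBM_c(n,a,b)$, the same decomposition $A = A_K + A_R$ applies, with $A_R$ coupled via $O(c)$ swaps to an unperturbed $RSBM(n,a,b)$. The unperturbed $RSBM$ has Perron eigenvalue $a+b$ on $\mathbf{1}$, community eigenvalue of size $|a-b|$ on $\mathbf{1}_{X_1} - \mathbf{1}_{X_2}$, and a Friedman-type bulk of order $\sqrt{a+b}$ for the remaining spectrum, giving $\lambda_2(A_R) = O(|a-b| + \sqrt{a+b})$. Weyl then bounds $\lambda_2(A)$ from above by $O(c + |a-b| + \sqrt{a+b})$. For the lower bound, three distinct $\mathbf{1}$-orthogonal test vectors --- the Alon-Boppana test, the community indicator, and the clique vector $f$ above --- contribute Rayleigh quotients of order $\sqrt{a+b}$, $|a-b|$, and $c$ respectively, and their maximum yields $\Omega(c + |a-b| + \sqrt{a+b})$, matching the claimed scaling up to absolute constants.

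The main obstacle is justifying the upper bound $\lambda_2(A_R) = O(\sqrt{d})$ (and its RSBM analog), since $A_R$ is not a random regular graph under any standard model: its degree sequence has $O(c)$ atypical vertices. The cleanest route is the edge-swap coupling sketched above, which produces an actual random regular graph or $RSBM$ at the cost of a perturbation of operator norm $O(c)$; since this is absorbed into the clique term in the final bound, it does not weaken the conclusion. An alternative is to invoke a stability version of Friedman's theorem for configuration models with nearly-uniform degree sequence, but the coupling argument should suffice for the $\Theta$-statements claimed.
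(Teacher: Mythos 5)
Your proposal is correct and follows essentially the same route as the paper's proof: the upper bound splits off the planted clique and applies Weyl's inequality (the clique part has norm at most $c$) together with the $\Theta\bigl(\sqrt{\tfrac{a+b}{2}}+\tfrac{a-b}{2}\bigr)$ spectral bound for the clique-removed graph, and the lower bound takes the maximum of Rayleigh quotients over test vectors for the clique, the community split, and the Alon--Boppana term, exactly as the paper does. The only difference is that you sketch an edge-swap coupling to justify treating the near-regular remainder as an unperturbed $RR$/$RSBM$ (a step the paper simply asserts with ``it can be seen''); note the total degree deficit at the clique vertices is $c(c-1)$, so this coupling needs $O(c^2)$ edge modifications rather than $O(c)$ swaps, but since all added edges meet the $c$ clique vertices the perturbation norm is still $O(c)$ and your conclusion is unchanged.
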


\begin{proof}
The first statement follows immediately from the second, which we will prove.  Let $G$ be $RSBM_c(n,a,b)$ with the $c$-clique removed.  Weyl's inequality gives that $\lambda_2(RSBM_c)\leq c + \lambda_2(G)$, it can be seen that $\lambda_2(G) = \Theta(\sqrt{\tfrac{a+b}{2}} + \tfrac{a-b}{2})$, this gives the upper bound.

To find a lower bound on $\lambda_2(RSBM_c)$, we use the Rayleigh quotient.  Because $RSBM_c(n,a,b)$ is regular, $\lambda_2 = \max\langle Af,f\rangle /\langle f,f\rangle.$  Taking the test-function $f = 1$ on community $X_1$ and $f = -1$ on $X_2$ shows that $\lambda_2 \geq \tfrac{a-b}{2}$.  Taking $f = 1$ on the $c$-clique and $f = -1$ on $c$ other vertices shows that $\lambda_2 \geq c/2$.  Finally the Alon-Boppana result gives us $\lambda_2 \geq 2\sqrt{\tfrac{a+b}{2}}$, this completes the proof.
\end{proof}

Now we will prove a similar result to \ref{thm_ER_SBM} regarding the $RR_c$ and $RSBM_c$ random graph models.  This result also contains an implicit algorithm for determining whether a graph comes from $RSBM_c(n,a,b)$ or $RR_c(n,\tfrac{a+b}{2})$.

\begin{theorem} \label{thm_RR_RSBM}
Let $G = RSBM(n,a,b)+H$, where $|V(H)|\leq c$.  If $\sqrt{(a+b)/2} \leq (a-b)/2$ (the KS threshhold), then, independently of the choice of $H$, with high probability, $$(1-o(1))\parens{\parens{\tfrac{a-b}{2}}^r-c\sqrt{\tfrac{a+b}{2}}^{\,r-1}}\leq \lambda_2(G^{(r)})\leq (1+o(1))\parens{c\sqrt{\tfrac{a+b}{2}}^{\,r-1}+\parens{\tfrac{a-b}{2}}^r}.$$

On the other hand, if $\sqrt{(a+b)/2} \leq (a-b)/2$, then with high probability
$$(1-o(1))\parens{(r+1)\sqrt{\tfrac{a+b}{2}}-c}\sqrt{\tfrac{a+b}{2}}^{\,r-1} \leq\lambda_2(G^{(r)})\leq (1+o(1))\parens{c+(r+1)\sqrt{\tfrac{a+b}{2}}}\sqrt{\tfrac{a+b}{2}}^{\,r-1}.$$

In particular, if $G = RR(n,d)+H$ (i.e., $a = b = d$), then WHP $$(1-o(1))\parens{(r+1)\sqrt{d}-c}\sqrt{d}^{\,r-1} \leq\lambda_2(G^{(r)})\leq (1+o(1))\parens{c+(r+1)\sqrt{d}}\sqrt{d}^{\,r-1}.$$
\end{theorem}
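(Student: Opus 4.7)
The proof follows the template of Theorem \ref{thm_ER_SBM}. Write $G = G' + H$ where $G' \sim RSBM(n,a,b)$, and apply Theorem \ref{thm_gen_clique_bound} to obtain
\begin{equation*}
\abs{\lambda_2(G^{(r)}) - \lambda_2((G')^{(r)})} \leq \sum_{q=0}^{r-1} c \max_i \sqrt{D^{(i)}(G') D^{(q-i)}(G')},
\end{equation*}
where $D^{(i)}(G')$ denotes the maximum degree in $(G')^{(i)}$ taken over the vertices of $V(H)$.

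Next I would exploit the regularity of $G'$. Each vertex has degree $d := (a+b)/2$ (following the paper's convention for $RSBM$), so the bound $D^{(i)}(G') \leq d^i$ holds \emph{deterministically}, with no polylogarithmic slack. Hence $\max_i\sqrt{D^{(i)}(G') D^{(q-i)}(G')}\leq d^{q/2}$, and summing the geometric series gives $\sum_{q=0}^{r-1}c\, d^{q/2} = O(c\sqrt{d}^{\,r-1})$. This is precisely the perturbation term $c\sqrt{(a+b)/2}^{\,r-1}$ appearing on both sides of the statement, and it is the essential gain over Theorem \ref{thm_ER_SBM}: there, irregularity of ER forced an extra $\log(n)^\alpha$ factor via the degree estimate of Lemma 4.6 of \cite{ABRS19}, which is simply not needed in the regular model.

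It remains to estimate $\lambda_2((G')^{(r)})$ for the unperturbed regular SBM. Above the KS threshold, the community indicator $\mathbb{1}_{X_1}-\mathbb{1}_{X_2}$ is, to leading order, an eigenvector with eigenvalue $(1\pm o(1))((a-b)/2)^r$, which dominates the Alon-Boppana floor $(r+1)\sqrt{d}^{\,r}$ given by Theorem \ref{abp}. Below the KS threshold the roles reverse and $\lambda_2((G')^{(r)}) = (1\pm o(1))(r+1)\sqrt{d}^{\,r}$, the matching lower bound again coming from Theorem \ref{abp} (applicable since $RSBM$ has diameter $\Theta(\log n)$ with high probability). Combining with the perturbation estimate of the previous paragraph yields the two-sided bounds in both regimes; the $RR$ case is the specialization $a=b=d$.

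The main obstacle is the upper bound on $\lambda_2((G')^{(r)})$ matching $(1+o(1))\max\{((a-b)/2)^r,\,(r+1)\sqrt{d}^{\,r}\}$. Because $A^{(r)} = \mathbb{1}((I+A)^r \geq 1)$ is a nonlinear function of $A$, Friedman-type bounds on $\lambda_2(A(G'))$ do not transfer mechanically to $A((G')^{(r)})$. The natural route is a biregular analog of the trace argument giving Theorem 2.6 of \cite{ABRS19}: use the local tree-like structure of $RSBM$ (whose universal cover is the $(a,b)$-biregular tree) to control closed walks of length up to $r$ in $G'^{(r)}$, then upgrade spectral bounds on $A(G')$ to bounds on $A((G')^{(r)})$ after projecting out the principal eigenspace and, above KS, the community eigenspace as well.
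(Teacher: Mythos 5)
Your proposal follows the paper's proof essentially verbatim: decompose $G = G' + H$ with $G' = RSBM(n,a,b)$, apply Theorem \ref{thm_gen_clique_bound} together with the deterministic regularity bound $D^{(i)} \leq ((a+b)/2)^i$ (no polylogarithmic factors, in contrast to the ER case), and combine with the unperturbed estimate $\lambda_2((G')^{(r)}) = (1+o(1))\max\{(r+1)\sqrt{(a+b)/2}^{\,r},\ ((a-b)/2)^r\}$. The only difference is one of presentation: you explicitly flag that last estimate as needing a biregular analog of Theorem 2.6 of \cite{ABRS19}, whereas the paper simply asserts it and concludes, so your account is, if anything, more candid about that dependency.
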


\begin{proof} This, like Theorem \ref{thm_ER_SBM} is an application of Theorem \ref{thm_gen_clique_bound}.  The proof is similar to that of Theorem \ref{thm_ER_SBM}.

Consider the case that $G = RSBM(n,a,b)+H$, define $G' = RSBM(n,a,b)$ so that $G = G'+H$.  Clearly $D^{(i)} \leq ((a+b)/2)^i$, and so, because $$\lambda_2(G'^{(r)}) = (1+o(1))\max\parens{(r+1)\sqrt{\tfrac{a+b}{2}}^{\,r},\parens{\tfrac{a-b}{2}}^r},$$ the results follow. 
\end{proof}

We will show that in the case of $RR_c(n,d)$, the upper bound in the previous theorem is tight up to a constant factor. 

\begin{theorem}\label{thm_RR_tight}
Let $G = RR_c(n,d)$ where $c = o(d)$.  Then WHP, $$\lambda_2(G^{(r)})\geq (1-o(1))\max\parens{c , (r+1)\sqrt{d}}\sqrt{d}^{\,r-1},$$ where the $o(1)$ vanishes as $\diam(G^{(r)})$ increases.
\end{theorem}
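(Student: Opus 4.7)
The statement is a maximum of two lower bounds, so the plan is to prove each in its own regime.

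For the $(r+1)\sqrt{d}^{\,r}$ term I would apply the Alon--Boppana argument that proves Theorem~\ref{lemma_reg} directly to $G=RR_c(n,d)$. Since the clique touches only $c=o(d)$ vertices, a $(1-o(1))$-fraction of edges of $G$ have their radius-$r$ $G$-neighbourhood disjoint from the clique, and WHP this neighbourhood is realized as a $d$-regular tree, just as for $RR(n,d)$. Plugging the localized tree test function from the proof of Theorem~\ref{lemma_reg} into such an edge gives $\lambda_2(G^{(r)})\geq (1-o(1))(r+1)\sqrt{d}^{\,r}$, which already matches $\max\parens{c,(r+1)\sqrt{d}}\sqrt{d}^{\,r-1}$ when $c\leq (r+1)\sqrt{d}$.

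For the $c\sqrt{d}^{\,r-1}$ term, which only matters when $c>(r+1)\sqrt{d}$, I would exhibit a two-dimensional test subspace and invoke the Courant--Fischer identity $\lambda_2 = \max_{\dim V=2}\min_{0\neq f\in V} \langle A^{(r)}f,f\rangle/\langle f,f\rangle$. Let $K=\{v_1,\dots,v_c\}$ be the clique and $T=\{w\notin K : \dist_G(w,K)\leq r-1\}$. Since every pair of clique vertices is at $G$-distance $1$, every $w\in T$ lies within $G$-distance $r$ of every $v_i$, so in $G^{(r)}$ each $v_i$ is adjacent to every $w\in T$ (the ``wormhole effect''), and the tree-like structure WHP gives $|T|=(1-o(1))cd^{r-1}$. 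Take $f=\mathbf{1}_K + d^{-(r-1)/2}\mathbf{1}_T$ and $V=\mathrm{span}(\mathbf{1},f)$. The $2\times 2$ Gram matrix $M$ and quadratic-form matrix $N$ satisfy, to leading order,
\begin{align*}
M \approx \begin{pmatrix} n & cd^{(r-1)/2}\\ cd^{(r-1)/2} & 2c\end{pmatrix},\qquad
N \approx \begin{pmatrix} nd^r & cd^{(3r-1)/2}\\ cd^{(3r-1)/2} & 2c^2d^{(r-1)/2}\end{pmatrix}.
\end{align*}
A direct computation shows $\det(N-\lambda M) = (d^r-\lambda)\bigl[2cn(c\sqrt{d}^{\,r-1}-\lambda) - c^2 d^{r-1}(d^r-\lambda)\bigr]$, and in the regime $n\gg d^{(3r-1)/2}$ the second factor yields a root at $(1-o(1))c\sqrt{d}^{\,r-1}$. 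Thus the two generalized eigenvalues are $d^r$ and $(1-o(1))c\sqrt{d}^{\,r-1}$; since $c=o(d)$ forces $c\sqrt{d}^{\,r-1}\ll d^r$, Courant--Fischer gives $\lambda_2(G^{(r)}) \geq (1-o(1))c\sqrt{d}^{\,r-1}$.

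The main obstacle will be checking that the within-$T$ contribution to $N_{22}$, namely $d^{-(r-1)}|E(G^{(r)}[T])|$, is negligible next to the cross term $2c^2d^{(r-1)/2}$. This reduces to counting, for each $w\in T_k:=\{w\in T:\dist_G(w,K)=k\}$, its $G^{(r)}$-neighbours still lying in $T$: going through the clique contributes at most $(c-1)\sum_{j=0}^{r-k-1}d^j$, while staying in the same tree branch contributes at most $O(d^{r/2})$ since any $G$-path of length $\le r$ forces the endpoints to share a tree ancestor at depth $\gtrsim (r-k-1)/2$. Summing over $k$ gives $|E(G^{(r)}[T])|=O(c^2 r d^{r-1} + cd^{3r/2-1})$, and $d^{-(r-1)}$ times this is $o(c^2 d^{(r-1)/2})$ precisely in the regime $c\gg r\sqrt{d}$. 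A secondary subtlety is absorbing the lower-order remainders in $M$ and $N$ into the $(1-o(1))$ factor, which reduces to the condition $n\gg d^{(3r-1)/2}$; this is implied by the hypothesis $\diam(G^{(r)})=\omega(1)$, since the diameter of a random $d$-regular graph is $\Theta(\log_d n)$ so $\diam(G^{(r)})=\omega(1)$ forces $\log_d n \gg r$.
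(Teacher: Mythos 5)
Your proposal is correct in substance but takes a genuinely different route from the paper for the clique term. The paper proves both lower bounds simultaneously by walk counting: it takes the test function supported on a clique vertex $x$ and a far-away vertex $y$ (as in Theorem \ref{thm_lambda}), so that $\lambda_2(G^{(r)})^k$ is bounded by the number of closed $k$-walks of $G^{(r)}$ at $x$, and then exhibits two families of closed walks --- tree-like walks avoiding the clique edges, giving $\parens{(1-o(1))(r+1)\sqrt{(d-c)d^{r-1}}}^k$, and walks whose every step passes through the clique (one clique edge plus $r-1$ tree edges out, then back), giving $\parens{(1-o(1))c\sqrt{d-c}\,\sqrt{d}^{\,r-2}}^k$ --- so the $\max$ comes for free at a single vertex and no control of edges inside the clique's $r$-neighborhood is needed. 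You instead get the clique term from a two-dimensional Courant--Fischer subspace $\mathrm{span}(\mathbf{1},\mathbf{1}_K+d^{-(r-1)/2}\mathbf{1}_T)$ and a $2\times 2$ generalized eigenvalue computation; this is a valid and arguably cleaner variational route, and it has the side benefit of not needing the Perron eigenvector of the (non-regular) $A^{(r)}$ at all, whereas the walk-counting route must orthogonalize against it. The price is exactly the two items you flag: controlling $E(G^{(r)}[T])$ and the perturbation of the pencil, and your condition $n\gg d^{(3r-1)/2}$ is indeed the right one and is implied by $\diam(G^{(r)})\to\infty$.

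Two quantitative slips in your error analysis, neither fatal: (i) the per-vertex claim that same-branch $G^{(r)}$-neighbors within $T$ number $O(d^{r/2})$ is false for shallow $w$ (a depth-$1$ vertex has $\approx d^{r-1}$ such neighbors); the correct per-vertex count at depth $k$ is $\approx d^{\,r-(k+1)/2}$, but after summing against the $\approx c\,d^{\,k}$ vertices at depth $k$ the total is dominated by $k=r-1$ and your stated bound $O(c^2 r d^{\,r-1}+c\,d^{\,3r/2-1})$ survives. (ii) Negligibility of the within-$T$ term requires only $c\gg\sqrt{d}$ (ratio $\sqrt{d}/c$ for the branch term and $r/d^{(r-1)/2}$ for the through-clique term), not $c\gg r\sqrt{d}$ as you state; this matters, because if one literally needed $c\gg r\sqrt{d}$ there would be a window $(r+1)\sqrt{d}\le c=O(r\sqrt{d})$ in which neither of your two bounds delivers the full $(1-o(1))\max\parens{c,(r+1)\sqrt{d}}\sqrt{d}^{\,r-1}$. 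Since $c\ge (r+1)\sqrt{d}$ already forces $c\gg\sqrt{d}$ as $r\to\infty$, the corrected condition closes this window and your argument covers the whole range.
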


\section{Open problems}

\begin{itemize}
    \item In Lemma \ref{lemma_girth} we show that a Ramanujan graph with girth more than $2r$ must be also $r$-Ramanujan after powering, taking advantage of the fact that all $r$-neighborhoods in that graph are trees.  Is this true in general - is every Ramanujan graph also $r$-Ramanujan (with some reasonable bound on $r$)?
    \item The converse of the previous problem - is every $r$-Ramanujan powered graph necessarily the $r$-th power of a Ramanujan graph?
    \item  Observe that we, along with our concurrent work \cite{ABRS19}, do not in general investigate the exponent in the factors  $\log(n)^{\alpha}$ which appears in our paper.  In particular in the Alon-Boppana result for powering, we see the bound $(r+1)d^{r/2}$ where $r = \ep \log(n)$, but in the bound for $\lambda_2(ER(n,d/n)^{(r)})$ we have $\log(n)^{alpha}d^r$.  Because these bounds are equivalent up to a factor of a power of $\log(n)$ we say that $ER(n,d/n)^{(r)}$ is $r$-Ramanujan.  Is it possible to better characterize the exponents of $\log(n)$ that appear in this work (especially related to the $ER$ graph) and to understand why they exist in view of the Alon-Boppana result for powering?
\end{itemize}

\section{Proofs}
\subsection{Proof of Theorem \ref{abp}}

We will prove two sub-theorems, and the result of Theorem \ref{abp} is a consequence.  The proof is based on  counting closed walks, as in the approach outlined for the Alon-Boppana Theorem in \cite{hoory}.

\begin{definition}$t_{2k}^{(r)}$ is the minimum, taken over all vertices $x\in V(G)$, of the number of closed walks of length $2k$ in $G^{(r)}$ terminating at $x$.\end{definition}

First, we will bound $\lambda_2(G^{(r)})$ in terms of $t_{2k}^{(r)}$.

\begin{theorem}\label{thm_lambda}
Let $G$ be a graph and $r\geq 1$.  Let $D$ be the diameter of $G$ and let $k$ satisfy $2k < \lceil D/r\rceil$.  Then $$\lambda_2(G^{(r)})^{2k}\geq t_{2k}^{(r)}.$$
\end{theorem}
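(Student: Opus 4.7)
The plan is to adapt the closed-walk Alon--Boppana argument to the powered graph $G^{(r)}$. First, observe that $\dist_{G^{(r)}}(x,y) = \lceil \dist_G(x,y)/r\rceil$, so the hypothesis $2k<\lceil D/r\rceil$ produces a pair of vertices $u,v\in V(G)$ with $\dist_{G^{(r)}}(u,v)>2k$. The key matrix-entry consequences then follow immediately: $\bigl((A^{(r)})^{2k}\bigr)_{uv}$ counts length-$2k$ walks in $G^{(r)}$ from $u$ to $v$ and therefore vanishes, while the diagonal entries $\bigl((A^{(r)})^{2k}\bigr)_{uu}$ and $\bigl((A^{(r)})^{2k}\bigr)_{vv}$ count closed $2k$-walks and are each at least $t_{2k}^{(r)}$ by definition.

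Next I would choose a two-dimensional test vector orthogonal to the Perron eigenvector. Let $v_1$ denote the top eigenvector of $A^{(r)}$, which has strictly positive entries by Perron--Frobenius (since $A^{(r)}$ is an entrywise nonnegative, irreducible matrix when $G$ is connected). The combination $h := v_1(v)\,\delta_u - v_1(u)\,\delta_v$ satisfies $\langle h,v_1\rangle = 0$, and the vanishing of the off-diagonal entry kills the cross-term in the quadratic form, leaving
\[
\frac{\langle (A^{(r)})^{2k} h,h\rangle}{\|h\|^2}
\;=\; \frac{v_1(v)^2 \bigl((A^{(r)})^{2k}\bigr)_{uu} + v_1(u)^2 \bigl((A^{(r)})^{2k}\bigr)_{vv}}{v_1(u)^2 + v_1(v)^2}
\;\geq\; t_{2k}^{(r)}.
\]
Since $h \perp v_1$ and $(A^{(r)})^{2k}$ shares its eigenbasis with $A^{(r)}$, this Rayleigh quotient is at most $\max_{i\ge 2}\lambda_i(G^{(r)})^{2k}$, and I would conclude $\lambda_2(G^{(r)})^{2k}\ge t_{2k}^{(r)}$.

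The main (and essentially only) delicate step is the last identification. Literally the Rayleigh argument gives $\max_{i\ge 2}\lambda_i(G^{(r)})^{2k}\ge t_{2k}^{(r)}$, and passing to $\lambda_2(G^{(r)})^{2k}$ requires knowing that $\lambda_2\ge |\lambda_n|$, so that the most-negative eigenvalue cannot spuriously realize the maximum. In the setting of this paper this is essentially automatic, since the self-loops in $G^{(r)}$ place $1$'s on the diagonal of $A^{(r)}$ and shift the spectrum upward so that the Perron side dominates; if one wants a completely assumption-free proof, the test space can be enlarged to three pairwise-far vertices (at the cost of a slightly stronger diameter hypothesis) and the Courant--Fischer min-max applied directly to a two-dimensional subspace of $v_1^\perp$. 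Aside from this point, all remaining ingredients---the distance identity $\dist_{G^{(r)}}(\cdot,\cdot) = \lceil\dist_G(\cdot,\cdot)/r\rceil$ and the strict positivity of $v_1$---are routine.
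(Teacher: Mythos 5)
Your argument is essentially the paper's own proof: the same distance identity $\dist_{G^{(r)}}(x,y)=\lceil \dist_G(x,y)/r\rceil$, the same test vector $f_1(y)\,\delta_x - f_1(x)\,\delta_y$ orthogonal to the Perron eigenvector, and the same Rayleigh-quotient computation exploiting the vanishing off-diagonal entry of $(A^{(r)})^{2k}$. The delicate point you flag---that the constrained supremum over $f\perp f_1$ literally yields $\max_{i\ge 2}\lambda_i(G^{(r)})^{2k}$ rather than $\lambda_2(G^{(r)})^{2k}$---is genuinely present, and the paper simply asserts the identity $\lambda_2(G^{(r)})^{2k}=\sup_{f\perp f_1}\langle f,(A^{(r)})^{2k}f\rangle/\langle f,f\rangle$ without comment, so your treatment is, if anything, more careful on the only subtle step.
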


\begin{proof}

Let $A^{(r)}$ be the adjacency matrix for $G^{(r)}$.
Let $f_1$ be an eigenfunction satisfying $A^{(r)}f_1 = \lambda_1(G^{(r)})f_1$.  By the Perron-Frobenius theorem, we can choose $f_1$ so that $f_1(z) > 0$ for all $z\in V$.

Because $A^{(r)}$ is symmetric, we can express $\lambda_2(G^{(r)})$ by the Rayleigh quotient \begin{align*}\lambda_2(G^{(r)})^{2k} = \sup_{f\perp f_1}\frac{\langle f, \parens{A^{(r)}}^{2k} f\rangle}{\langle f, f\rangle},\end{align*} where $k$ is any non-negative integer.  To obtain a lower bound on $\lambda_2(G^{(r)})$ we will set a test-function $f$ for this quotient.

For vertices $x,y\in V(G)$, set $f_{xy}(x) = f_1(y)$, $f_{xy}(y) = -f_1(x)$ and $f_{xy} \equiv 0$ otherwise.  Clearly $f_{xy}\perp f_1$.  It is well-known that $[\parens{A^{(r)}}^{2k}]_{ij}$ counts the number of walks in $G^{(r)}$ of length $2k$ that start at $i$ and end at $j$.  If $z\in V$, \begin{align*}\left(\parens{A^{(r)}}^{2k}f_{xy}\right)(z) = f_1(y)\left[\parens{A^{(r)}}^{2k}\right]_{xz}-f_1(x)\left[\parens{A^{(r)}}^{2k}\right]_{yz}.
\end{align*}

Let $D$ be the diameter of $G$, and choose $x$ and $y$ to be vertices with $\dist_G(x,y) = D$.  It follows that $d_{G^{(r)}}(x,y) = \lceil D/r \rceil$.  Choose $k$ so that $2k < \lceil D/r \rceil$.  There are no $2k$-walks in $G^{(r)}$ from $x$ to $y$ (or vice-versa), so that $\left[\parens{A^{(r)}}^{2k}\right]_{xy} = \left[\parens{A^{(r)}}^{2k}\right]_{yx} = 0$.  Now, our expression for $\lambda_2$ simplifies to \begin{align*}\lambda_2^{2k}\geq \frac{\langle f_{xy},\parens{A^{(r)}}^{2k}f_{xy}\rangle }{\langle f_{xy},f_{xy}\rangle} = \frac{f_1(y)^2\left[\parens{A^{(r)}}^{2k}\right]_{xx} + f_1(x)^2\left[\parens{A^{(r)}}^{2k}\right]_{yy}}{f_1(y)^2+f_1(x)^2} \geq t_{2k}^{(r)},\end{align*} where the last inequality holds because $t_{2k}^{(r)}\leq \left[\parens{A^{(r)}}^{2k}\right]_{zz}$ for all $z\in V$.
\end{proof}
  
Second, we will derive a lower bound on $t_{2k}^{(r)}$ in terms of the modified minimum degrees $\delta^{(i)}:0\leq i\leq r$.

\begin{theorem}\label{thm_tree}
Let $r$ and $k$ be positive integers. \begin{align*}\parens{t^{(r)}_{2k}}^{1/(2k)}\geq \parens{1-o(1)} \sum_{i = 0}^{r}\sqrt{\delta^{(i)}\delta^{(r-i)}}. \end{align*}
\end{theorem}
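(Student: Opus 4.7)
My plan is to adapt the classical walks-in-a-tree proof of the Alon--Boppana theorem to the powered setting, specifically by estimating the spectral radius of a locally tree-like model of $G^{(r)}$ around a worst-case vertex.

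First, I would fix a vertex $x_0 \in V(G)$ (the eventual minimizer) and an arbitrary neighbor $y_0$ of $x_0$ in $G$. For each $i \in \{0, 1, \ldots, r\}$, the set $V_i(x_0, y_0)$ has at least $\delta^{(i)}$ elements by the definition of $\delta^{(i)}$ as a global minimum over edges; the same lower bound holds recursively at every edge encountered along a walk.

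Next I would set up an ``abstract tree'' $\hat{T}$ encoding the minimum-expansion tree-like structure around $x_0$. Recursively, each node $u$ with parent $u'$ receives a label $\phi(u) \in V(G)$, and for each $i = 1, \ldots, r$ it spawns $\delta^{(i)}$ descendants at $\hat{T}$-depth $i$, mapped injectively into distinct elements of $V_i(\phi(u), \phi(u'))$. The $r$-th power $\hat{T}^{(r)}$ then embeds locally into $G^{(r)}$ via $\phi$: a walk of length $2k$ in $\hat{T}^{(r)}$ based at the root corresponds to a closed walk of length $2k$ in $G^{(r)}$ based at $x_0$, and distinct $\hat{T}^{(r)}$-walks produce distinct $G^{(r)}$-walks as long as the walk stays within a ball of radius $\leq kr$ in $\hat{T}$, which is guaranteed under the hypothesis $2k < \lceil \diam(G)/r \rceil$ of Theorem \ref{thm_lambda}.

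The heart of the argument is the spectral-radius estimate $\rho(\hat{T}^{(r)}) \geq \sum_{i=0}^r \sqrt{\delta^{(i)} \delta^{(r-i)}}$, which I would carry out by testing the Rayleigh quotient $\langle f, A^{(r)}_{\hat{T}} f\rangle / \langle f, f\rangle$ against $f(v) = \mu^{\dist_{\hat{T}}(x_0, v)}$ for an optimally chosen decay rate $\mu$. Expanding $A^{(r)}_{\hat{T}} = \sum_{i = 0}^r A_i$ (distance-$i$ adjacency on $\hat{T}$) and computing level by level, the $i$-th summand contributes a ``forward'' term proportional to $\delta^{(i)} \mu^i$ and a ``backward'' term proportional to $\delta^{(r-i)} \mu^{-(r-i)}$; the AM--GM-optimized balance yields $\sqrt{\delta^{(i)} \delta^{(r-i)}}$ for each $i$, and summing over $i$ produces the claimed bound (which reduces in the $d$-regular case to $\rho(T_d^{(r)}) \sim (r+1)(d-1)^{r/2}$, recovered via Chebyshev-polynomial spectral theory on the $d$-regular tree). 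Once the spectral estimate is in hand, the count of length-$2k$ closed walks at the root of $\hat{T}^{(r)}$ is $\geq (1 - o_k(1))^{2k} \rho(\hat{T}^{(r)})^{2k}$, and since these inject into $G^{(r)}$-walks at $x_0$, the theorem follows by taking the $(2k)$-th root.

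The main obstacle I anticipate is the precise spectral-radius computation, specifically obtaining the geometric-mean form $\sqrt{\delta^{(i)} \delta^{(r-i)}}$ rather than a weaker bound such as $\delta^{(i)}$ alone: the geometric mean reflects a delicate balance between forward and backward contributions at each level of $\hat{T}$, the powered-graph analogue of the balance producing the classical $2\sqrt{d-1}$ factor, and the sum $\sum_{i=0}^r$ captures the new feature that a single $G^{(r)}$-step can have any $G$-length in $\{0, 1, \ldots, r\}$. A secondary concern, easier to dispatch, is the injectivity of $\phi$ on walks, which follows routinely from the ``away from $y_0$'' constraint baked into the $V_i$'s combined with the diameter hypothesis.
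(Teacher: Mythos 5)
The central gap is your injectivity claim, which you dismiss as routine but which is in fact the crux of the whole argument. In your construction every node of $\hat{T}$ spawns descendants at every depth $1,\dots,r$, so a single $G$-vertex typically labels several distinct tree nodes within $\hat{T}$-distance $r$ of a given node: already for $r=2$, a vertex $z$ with $\dist_G(\phi(u),z)=2$ appears both as a depth-$2$ spawn of $u$ and as a depth-$1$ spawn of a depth-$1$ spawn of $u$ (this is exactly the paper's example $x\sim y\sim z\sim w\sim x$, where the $G^{(2)}$-step from $x$ to $z$ has several underlying realizations). Consequently the projection of closed $\hat{T}^{(r)}$-walks at the root to closed $G^{(r)}$-walks at $x_0$ is \emph{not} injective; the ``away from the parent'' constraint removes some collisions but not this up/down ambiguity, and the diameter hypothesis is irrelevant here (it belongs to Theorem \ref{thm_lambda}, not to this statement, whose $o(1)$ is as $k\to\infty$ with $r$ fixed). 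Nor can you repair it with a multiplicity bound: the fiber of a single $G^{(r)}$-step can have size of order $(r+1)^2$ (one preimage per ancestor/spawn-depth pair), and losing any factor that grows with $r$ per step destroys exactly the factor $(r+1)$ that distinguishes this theorem from the naive bound $2\sqrt{d^{(r)}-1}$. This is precisely the difficulty the paper's proof is organized around: it fixes canonical geodesics between vertices at distance $i$, counts only ``$r$-canonically constructed'' walk sequences inside $G$ itself, and proves (Theorem \ref{thm_treeseq}) that the resulting $G^{(r)}$-walk determines the underlying sequence uniquely, by recovering each move type as the least $m$ with $\dist_G(x_{i+1},v_m)\leq m$; the requirement $\dist_G(v_{m-1},y)\geq m$ exists to make that reconstruction valid. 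Pruning your tree so that only one preimage per step survives amounts to rebuilding this canonical-path machinery, so the missing ingredient is not a technicality of your approach but its main content.

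A secondary problem is the spectral-radius estimate itself. As stated, the bookkeeping is wrong: in a tree the ``backward'' direction from a vertex has multiplicity one, so the distance-$i$ summand contributes roughly $\delta^{(i)}\mu^{i}+\mu^{-i}$, whose balance is $\sqrt{\delta^{(i)}}$, not $\sqrt{\delta^{(i)}\delta^{(r-i)}}$. The geometric means actually arise from neighbors at tree distance exactly $r$ reached by going up $r-i$ levels and then down $i$ into a fresh subtree, which contribute about $\delta^{(i)}\mu^{2i-r}$ to the Rayleigh ratio; pairing the $i$ and $r-i$ terms gives $\delta^{(i)}\mu^{2i-r}+\delta^{(r-i)}\mu^{r-2i}\geq 2\sqrt{\delta^{(i)}\delta^{(r-i)}}$ for any $\mu$, so this part is salvageable, but only after you also label the intermediate nodes along geodesics (otherwise $\phi$ need not carry $\hat{T}^{(r)}$-edges to $G^{(r)}$-edges at all), handle vertices with fewer than $r$ ancestors, and justify quantitatively the passage from $\rho(\hat{T}^{(r)})$ to closed-walk counts at the root. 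The paper avoids all of this by a direct combinatorial count of the admissible length-change sequences (Dyck/Motzkin-type words weighted by Catalan numbers, assembled with the even-partition multinomial bound of Lemma \ref{lemma_binom}), which yields the lower bound $\parens{1-o(1)}\parens{\sum_{i=0}^{r}\sqrt{\delta^{(i)}\delta^{(r-i)}}}^{2k-2}$ on $t_{2k}^{(r)}$ without any auxiliary infinite tree.
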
  Here, the $o$ notation is in terms of $k$, treating $d$ and $r$ as constants.  

Fix $x$ to be vertex with the minimum number of closed walks of length $2k$ in $G^{(r)}$ terminating at $x$.  By definition, that number is $t_{2k}^{(r)}$.

For exposition, we will start by outlining a proof of this result for simple graphs, first given in \cite{hoory}.  

\begin{theorem}

Let $G$ be a $d$-regular graph and $x\in V(G)$.  Consider $t_{2k}$, the number of closed walks on $G$ of length $2k$ terminating at $x$.  Then $$\parens{t_{2k}}^{1/(2k)}\geq \parens{1-o(1)}2\sqrt{d-1}.$$

\end{theorem}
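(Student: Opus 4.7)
The natural strategy is the classical Alon--Boppana comparison with the $d$-regular infinite tree. First I would form the universal cover $\tilde G$ of $G$ rooted at some lift $\tilde x$ of $x$. Since $G$ is $d$-regular, $\tilde G$ is isomorphic to $T_d$ with root $\tilde x$, and the covering map $\pi:T_d\to G$ is a local isomorphism sending $\tilde x$ to $x$. By unique path-lifting, walks of length $2k$ in $T_d$ starting at $\tilde x$ are in bijection with walks of length $2k$ in $G$ starting at $x$; in particular, any closed walk in $T_d$ at $\tilde x$ projects under $\pi$ to a closed walk in $G$ at $x$, and this projection is injective on closed walks at the root. Hence if $\tilde t_{2k}$ denotes the number of closed walks of length $2k$ at the root of $T_d$, then $t_{2k}\geq \tilde t_{2k}$, and it suffices to bound $\tilde t_{2k}$ from below.

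Second, I would count $\tilde t_{2k}$ combinatorially. Each closed walk at the root of $T_d$ of length $2k$ has a canonical height profile (distance from the root after each step) which is a Dyck path: every down-step is forced (toward parent), while every up-step carries a choice of child. Leaving the root, an up-step has $d$ choices; leaving any vertex at depth $\geq 1$, an up-step has $d-1$ choices. Thus a fixed Dyck path with $a\geq 1$ up-steps from height $0$ contributes exactly $d^{a}(d-1)^{k-a}\geq (d-1)^{k}$ distinct walks. The number of Dyck paths of length $2k$ is the Catalan number $C_k=\binom{2k}{k}/(k+1)$, so
\[
\tilde t_{2k}\ \geq\ C_k\,(d-1)^{k}.
\]

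Third, I would take $2k$-th roots. Using the asymptotic $C_k\sim 4^{k}/(\sqrt{\pi}\,k^{3/2})$, one has $C_k^{1/(2k)}\to 2$ as $k\to\infty$, and therefore
\[
t_{2k}^{1/(2k)}\ \geq\ C_k^{1/(2k)}\sqrt{d-1}\ \geq\ (1-o(1))\cdot 2\sqrt{d-1},
\]
where $o(1)\to 0$ as $k\to\infty$ with $d$ fixed. This delivers the claimed bound.

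The argument is essentially mechanical once the universal cover is introduced; the only step that demands any real care is confirming that distinct closed walks at $\tilde x$ in $T_d$ project to distinct closed walks at $x$ in $G$, which is where one invokes unique lifting. The subsequent Dyck-path count and the Catalan asymptotic are standard, so I do not expect a substantive obstacle. The one minor loss in this bound is using $d^a(d-1)^{k-a}\geq (d-1)^k$ rather than the exact count; this is harmless since the $2k$-th root washes out constant-order factors as $k\to\infty$.
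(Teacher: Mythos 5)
Your proposal is correct and follows essentially the same route as the paper: the paper's "tree-like" walks are exactly the closed walks on the universal cover (a point the paper itself notes parenthetically), and both arguments then count a Dyck word/path of length $2k$ together with at least $d-1$ child choices per up-step, giving $t_{2k}\geq C_k(d-1)^k$ and the bound after taking $2k$-th roots via the Catalan asymptotic. Your explicit treatment of unique path-lifting to justify $t_{2k}\geq \tilde t_{2k}$ is a slightly more careful rendering of the same idea, not a different method.
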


The following is the proof outline:

\begin{itemize}
\item We wish to obtain a lower bound on the number of closed walks of length $2k$ from $x$ to $x$ in $G$.
\item Consider only those walks that are ``tree-like", i.e., walks with the following description.   We start a walk at the root $x$.  We construct a tree labelled with vertices of $G$: if at some step we are at $y$, we can either move to the parent of $y$ or generate a child of $y$ by moving to some other neighbor.  We want to count only those walks that trace all edges of the tree twice and so terminate at the original root $x$; for example:
\begin{itemize}
\item $x,y,z,y,x$ is tree-like.
\item $x,y,z,y,z,y,x$ is also tree-like : in this case $z$ is the label for two children of the same vertex.
\item $x,y,z,x$ is closed but not tree-like, because it ends on a node of depth $3$ rather than the root.
\end{itemize}
(Another characterization is that tree-like walks correspond to closed walks on the cover graph of $G$.)
\item In a tree-like walk of length $2k$, the sequence of moving to a child / moving to a parent makes a Dyck word of length $2k$, of which there are the $k$-th Catalan number $C_k$.
\item For a Dyck word, there are at least $d-1$ choices of which child to generate at each of those $k$ steps, and exactly $1$ choice at each step that we return to the parent node.  Note here that $d-1 = \delta^{(1)}$ for a $d$-regular graph.
\item In total we find that $$t_{2k}\geq (d-1)^kC_k\gtrapprox \frac{\parens{2\sqrt{d-1}}^{2k}}{k^{3/2}},$$ the bound follows.
\end{itemize}

We will to use a similar process to bound $t_{2k}^{(r)}$.  A simple suggestion is to consider walks in $G^{(r)}$ that correspond to closed walks on the cover graph of $G^{(r)}$.  The argument outlined above works (as it will for the cover graph of \textit{any} graph) and we see that $$t_{2k}^{(r)}\geq (d^{(r)}-1)C_k\approx \parens{2\sqrt{d^{(r)}-1}}^{2k},$$ where $d^{(r)}$ is the minimum degree in $G^{(r)}$.

The problem is that this is not tight: if we approximate $\delta^{(i)} \approx d^{(i)} \approx d^i$ (as is the case in a $d$-regular graph with girth larger than $2i$), this result gives $$\parens{t_{2k}^{(r)}}^{1/(2k)} \gtrapprox 2d^{r/2},$$  while our theorem has the result
$$\parens{t_{2k}^{(r)}}^{1/(2k)} \gtrapprox (r+1)d^{r/2}.$$  In order to prove the theorem we must improve this method.  Consider a class of walks we are not counting:

Let $r = 2$, suppose $x,y,z$ are vertices with a common neighbor $w$.  $x,y,z,x$ is a closed walk in $G^{(2)}$ but it does not correspond to a closed walk in the cover graph of $G^{(2)}$.  However, the underlying $G$-walk $x,w,y,w,z,w,x$ \textit{does} correspond to a closed walk on the cover graph of $G$.  This is the type of walk that we want to count, but failed to do so in our first attempt.

So instead we can try to count the number of walks in the cover graph of $G$ that are the underlying walk for some $G^{(r)}$-walk.  But this suggestion introduces a new issue: a given $G^{(r)}$-walk may correspond to several different underlying walks on the cover graph of $G$.  For instance (again in the setting $r=2$), if $x\sim y\sim z\sim w\sim x$, then the $G^{(2)}$-walk $x,z,x$ corresponds to both $x,y,z,y,x$ and $x,w,z,w,x$.  If we want to count the underlying walks as a lower bound on the number of $G^{(2)}$-walks, we must disregard all but at most $1$ of the underlying walks that correspond to a given $G^{(2)}$-walk.  For this reason we introduce a system of canonical paths between neighbors in $G^{(r)}$ that we follow when generating an underlying walk on $G$.

\begin{definition}[Canonical $i$-path]For all ordered pairs of distinct vertices $(v,w)$ with $\dist_G(v,w) = i$, we arbitrarily choose a canonical path of length $i$ from $v$ to $w$.\end{definition}

We now define the set of tree-like walks in $G^{(r)}$, which we will later count.  The tree-like walks are analogous to walks on the cover graph of $G$.  We first define the related concept of a sequence of canonically constructed walks.

\begin{definition}[Sequence of $r$-canonically constructed walks]
Let $k>0$.  A sequence of $r$-canonically constructed walks is a sequence $W_0,\dots,W_{2k}$ of walks that is constructed according to the following process:

Assume $W_i$ has length at least $r$, so it can be expressed as $W_i = \dots, v_0,v_1,\dots, v_r$.  We can then make a move of type $m$ to obtain $W_{i+1}$, where $m$ is an integer satisfying $0\leq m\leq r$.

In a move of type $m$, we start by removing the last $r-m$ vertices from $W_i$, leaving $\dots v_0,v_1,\dots, v_m$.  Let $y$ be a vertex with $\dist_G(v_m,y) = m$ and $\dist_G(v_{m-1},y)\geq m$.  We can append to our sequence the canonical walk from $v_m$ to $y$ of length $m$ to obtain $W_{i+1}$.  Observe that $\dist_G(v_r,y)\leq \dist_G(v_r,v_m) + \dist_G(v_m,y) = (r-m)+m = r$, so that $v_r$ and $y$ are neighbors in $G^{(r)}$.

We require $W_0 = W_{2k} = x$.  To find $W_1$, we are allowed to make a move of type $r$ (removing no vertices from $W_0$).  When $0 < i < 2k$, we require that $W_i$ has length at least $r$.
\end{definition}

The motivation for this definition is that we are taking a $G^{(r)}$-walk on the endpoints of $W_0,\dots,W_{2k}$: observe that in such a sequence of walks, the endpoints of $W_i$ and $W_{i+1}$ have graph distance at most $r$.

Note that the final move from $W_{2k-1}$ to $W_{2k}$ must be of type $0$, where $W_{2k-1}$ has length $r$ and $W_{2k}$ has length $0$.

Note that if there is a move of type $m$ from $W_i$ to $W_{i+1}$ where $W_{i+1}$ ends in $y$, no move of type $<m$ from $W_i$ can end in $y$.  Suppose on the contrary there is $j<m$ so that some move of type $j$ ends in $y$.  Then $\dist_G(v_{m-1},y)\leq \dist_G(v_{m-1},v_j) + \dist_G(v_j,y) \leq (m-1-j)+j = m-1$.  This contradicts that $\dist_G(v_{m-1},y) \geq m$.

Also note that, given $W_i$ is long enough that moves of type $m$ are allowed, the number of possible moves of type $m$ that can generate distinct possibilities for $W_{i+1}$ is at least $\delta^{(m)}$.

\begin{definition}[Tree-like walk in $G^{(r)}$]
Let $k>0$.  A tree-like walk of length $2k$ in $G^{(r)}$ is the closed walk made by the endpoints of an sequence of $r$-canonically constructed walks starting at $x$ and having length $2k$.
\end{definition}

We will count the number of tree-like walks in $G^{(r)}$.  First, it is useful to prove an equivalence between the tree-like walk $x_0,\dots, x_{2k}$ and underlying sequence of $r$-canonically constructed walks $W_0,\dots, W_{2k}$.

\begin{theorem}\label{thm_treeseq}
Let $k > 0$.  Let $\mathcal{W}$ be the set of sequences of $r$-canonically constructed walks.  Let $\mathcal{X}$ be the set of tree-like walks in $G^{(r)}$.  Then the function that takes an sequence of $r$-canonically constructed walks and outputs the tree-like walk consisting of its final vertices is a bijection between $\mathcal{W}$ and $\mathcal{X}$.
\end{theorem}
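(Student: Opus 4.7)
The plan is to check surjectivity from the definition and then prove injectivity inductively along the sequence of walks. Surjectivity is automatic: by the definition of a tree-like walk, every element of $\mathcal{X}$ arises as the endpoint sequence of some $r$-canonically constructed sequence, so $\Phi$ maps onto $\mathcal{X}$. The substantive content is injectivity, and my plan is to show that the pair $(W_i, x_{i+1})$ uniquely determines $W_{i+1}$, so the entire sequence $W_0, \ldots, W_{2k}$ can be reconstructed from the endpoint sequence $x_0, \ldots, x_{2k}$ by a left-to-right induction.

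The key lemma is uniqueness of the move type: given $W_i$ with last $r+1$ vertices $v_0, v_1, \ldots, v_r$ and the prescribed endpoint $y$ of $W_{i+1}$, there is at most one $m \in \{0,1,\ldots,r\}$ for which a type-$m$ move from $W_i$ can end at $y$. This is precisely the observation already recorded after the definition of an $r$-canonically constructed sequence. The proof is a two-line distance inequality: if both a type-$m$ move and a type-$m'$ move with $m' < m$ could produce $y$, the type-$m'$ condition would give $\dist_G(v_{m'}, y) = m'$, while the type-$m$ condition demands $\dist_G(v_{m-1}, y) \geq m$. But walking along $W_i$ from $v_{m'}$ to $v_{m-1}$ uses $m - 1 - m'$ edges, so
\[
\dist_G(v_{m-1}, y) \;\leq\; \dist_G(v_{m-1}, v_{m'}) + \dist_G(v_{m'}, y) \;\leq\; (m - 1 - m') + m' \;=\; m - 1,
\]
contradicting $\dist_G(v_{m-1}, y) \geq m$.

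Once $m$ is forced by $(W_i, y)$, the walk $W_{i+1}$ itself is forced: we remove the last $r - m$ vertices of $W_i$ and append the canonical walk of length $m$ from $v_m$ to $y$, which is a fixed choice made in advance. So $W_{i+1}$ is a function of $(W_i, x_{i+1})$, and induction from $W_0 = x$ recovers the entire sequence. The boundary cases are slightly special but pose no real issue: the first step $W_0 \to W_1$ is the allowed type-$r$ move from a length-zero walk, so $W_1$ must be the canonical path from $x$ to $x_1$ of length $r$, which is determined by $x_1$ alone; and the final move to $W_{2k} = x$ is necessarily of type $0$ since $W_{2k-1}$ has length $r$ and $W_{2k}$ has length $0$.

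I expect no serious obstacle, since the single distance-inequality computation above (already highlighted by the authors in the definition section) carries the entire argument. The only place to be careful is in handling the boundary moves cleanly and in checking that the first-move convention unambiguously determines $W_1$ from $x_1$.
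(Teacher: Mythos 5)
Your proposal is correct and follows essentially the same route as the paper: injectivity via left-to-right reconstruction, with the move type forced by the same triangle-inequality computation $\dist_G(v_{m-1},y)\leq \dist_G(v_{m-1},v_{m'})+\dist_G(v_{m'},y)\leq m-1$ (the paper phrases it as ``take the least $m$ with $\dist_G(x_{i+1},v_m)\leq m$'' and rules out both smaller and larger types, which is the same observation), after which the canonical-path convention pins down $W_{i+1}$. No gaps.
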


\begin{proof}
It is clear from the definition of tree-like walks that this function is surjective.  It remains to show that it is injective.  We will argue that it is possible to recover the sequence of $r$-canonically constructed walks $W_0,\dots, W_{2k}$ given the tree-like walk $x_0,\dots, x_{2k}$.

Start with $W_0 = x = x_0$.  Given $x_0,\dots x_k$ and $W_i$ we wish to find $W_{i+1}$.  We first determine the type of the move from $W_i$ to $W_{i+1}$:

Write $W_i = \dots, v_0,v_1,\dots v_r$.  Let $m$ be the least integer so that $\dist_G(x_{i+1},v_m) \leq m$.  Then the move from $W_i$ to $W_{i+1}$ must be of type $m$, for the following reasons:  assume for contradiction that the type is not $m$.

If the move is in fact of type $j < m$, then $\dist_G(x_{i+1},v_j) = j$, and so $m$ is not the least integer satisfying $\dist_G(x_{i+1},v_m) \leq m$, this is a contradiction.

Instead the move may be of type $j > m$.  But then, $\dist_G(v_{j-1}, x_{i+1})\leq \dist_G(v_{j-1},v_m) + \dist_G(v_m, x_{i+1})\leq (j-1-m)+m = j-1$.  This contradicts the assumption that $\dist_G(v_{j-1},x_{i+1})\geq j$ in a move of type $j$.

So the move must be of type $m$.  We can determine $W_{i+1}$ by removing all vertices after $v_m$ from $W_i$ and then appending the canonical walk from $v_m$ to $x_{i+1}$.
\end{proof}

Because every tree-like walk is a walk, $t_{2k}^{(r)}\geq \abs{\mathcal{X}}$.  It follows from Theorem \ref{thm_treeseq} that $t_{2k}^{(r)}\geq \abs{\mathcal{W}}$.  In order to prove Theorem \ref{thm_tree}, we will look for a lower bound on $|\mathcal{W}|$.

First, given $W_0,\dots, W_i$, we count the number of ways that we can make a move of type $m$ to generate a walk $W_{i+1}$.  Assuming $W_i = \dots, v_0,v_1,\dots v_r$, the last vertex of $W_{i+1}$ can be any vertex $y$ so that $\dist_G(v_m,y) = m$ and $\dist_G(v_{m-1},y) \geq m$.  Because $(v_m, v_{m-1})\in E$, the number of such vertices is at least $\delta^{(m)}$.

Now, suppose $m_1,\dots, m_{2k}$ is a legal sequence of move types that generate a closed walk. 
Here, $m_i$ represents the move type between $W_{i-1}$ and $W_i$.  The number of tree-like walks with such a sequence is at least $$\prod_{i=1}^{2k} \delta^{(m_i)}.$$

The remaining difficulty is to describe the allowed sequences $m_1,\dots, m_{2k}$ of move types that will result in $r$-canonically constructed sequences of walks.  Here, it is convenient to introduce the sequence of length changes $p_1,\dots, p_{2k}$:

\begin{definition}
Let $W_0,\dots, W_{2k}$ be a sequence of $r$-canonically constructed walks.  Let $\text{len}(W_i)$ represent the length (as a walk) of $W_i$.  Then the sequence of length changes is $p_1,\dots, p_{2k}$, where $p_i = \text{len}(W_{i})-\text{len}(W_{i-1})$.
\end{definition}

Because $W_{i}$ is obtained by removing $r-m_i$ vertices from $W_{i-1}$ and then appending $m_i$ vertices to the result, $p_i = 2m_i-r$.  This gives an obvious equivalence between the allowed sequences of move types and the allowed sequences of length changes.  So we will instead consider the problem of which sequences of length changes result in an $r$-canonically constructed sequence of walks.

Note that the number of tree-like walks with length change sequence $p_1,\dots, p_{2k}$ is at least $$\prod_{i=1}^{2k}\delta^{(\tfrac{r+p_i}{2})}.$$

It is straightforward to see that $p_1 = r$ and $p_{2k} = -r$ for any allowed sequence.  Because $W_0 = W_{2k} = x$, the total length change is $$0 = \sum_{j=1}^{2k}p_j.$$  The other requirement is the $\text{len}(W_i)\geq r$ whenever $1\leq i\leq 2k-1$; i.e., $$0\leq \sum_{j=2}^{i}p_j,$$ whenever $1\leq i\leq 2k-1$ (observing that $r = p_1 = \text{len}(W_1)$).

Notice that $p_i\in \{-r,-r+2,-r+4,\dots, r-4,r-2,r\}$.
Given that $p_1 = r,p_{2k} = -r$, we have the problem of finding length change sequences $p_2,\dots,p_{2k-1}$ whose sum is $0$ and the partial sum of the first $i$ terms is non-negative for any value of $i$.  For $r=1$, $p_i\in \{-1,1\}$, these sequences are equivalent to the Dyck words of length $2k-2$, which are well-known to be enumerated by the Catalan number $C_{k-1}$.  For $r=2$, $p_i\in \{-2,0,2\}$, these are equivalent to the Motzkin paths of length $2k-2$, which were first studied in \cite{Mot48}.  For general $r$ the problem of enumerating such sequences has recently been posed in \cite{EKM18}.  There is currently no known closed-form enumeration or non-trivial relation to another problem in the case $r\geq 3$.  For that reason we will restrict ourselves to a subset of the length change sequences; this will make the remaining computations more straightforward, and our final bound on $\lambda_2(G^{(r)})$ will still be tight.  We will only consider length change sequences $p_1,\dots, p_{2k}$ satisfying the conditions:

\begin{itemize}
\item $p_1 = r$ and $p_{2k} = -r$.
\item If $j\in \{-r,-r+2,\dots, r-2,r\}$ and $j>0$ the subsequence that consists of only entries $j$ and $-j$ is a Dyck word starting with type $j$.
\item The subsequence of entries $r$ and $-r$ is still a Dyck word if the entries $p_1 = r$ and $p_{2k} = -r$ are removed.
\item If $r$ is even, there may also be entries $p_i = 0$, corresponding to moves of type $r/2$.
\end{itemize}

In any tree-like walk with such a sequence of length changes, the Dyck word criteria require that $\text{len}(W_i) = r+\sum_{j=2}^i p_j\geq r$ whenever $1 \leq i \leq {2k-1}$.  In addition, because the number of length changes of $j$ and $-j$ are always equal, $\text{len}W_{2k}=0$.

In order to count over the sequences of $r$-canonically constructed walks that have such a length change sequence, we need to prove a lemma.  The well-known binomial identity reveals that \begin{align*}
\sum_{i_1 + \dots i_k = n}\binom{n}{i_1,\dots, i_k}x_1^{i_1}\dots x_k^{i_k} = \parens{\sum_{i=1}^k x_i}^n.\end{align*}  In our results, we will use similar sums, except, instead of summing over all partitions of $n$, we sum over only even partitions of $n$.  (We require that $n$ be even so that such a partition is possible.)  In this lemma we bound the sum over even partitions in terms of the binomial identity.

\begin{lemma} \label{lemma_binom}
Suppose $x_1,\dots,x_k \geq 0$ and $2n$ is a non-negative even integer.  Then

\begin{align*}\sum_{2m_1+\dots 2m_k = 2n} \binom{2n}{2m_1,\dots, 2m_k}x_1^{2m_1}\dots x_k^{2m_k}\geq \frac{1}{2^{k-1}} \parens{\sum_{i=1}^k x_i}^{2n},\end{align*}

where the first sum is over all $k$-tuples of non-negative even integers that sum to $2n$.

\end{lemma}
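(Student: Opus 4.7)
The plan is to use a symmetrization-over-signs trick to convert the restricted (even-exponent) multinomial sum into an average of ordinary multinomial expansions, and then throw away all but one nonnegative term.

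First, I would write down the identity
\begin{align*}
\sum_{\epsilon \in \{-1,1\}^k} \parens{\sum_{i=1}^k \epsilon_i x_i}^{2n}
 = \sum_{m_1+\dots+m_k = 2n} \binom{2n}{m_1,\dots,m_k} \prod_{i=1}^k x_i^{m_i} \prod_{i=1}^k \sum_{\epsilon_i \in \{-1,1\}} \epsilon_i^{m_i},
\end{align*}
obtained by expanding each $(\epsilon\cdot x)^{2n}$ via the multinomial theorem and swapping the order of summation. The inner one-variable sum $\sum_{\epsilon_i} \epsilon_i^{m_i}$ equals $2$ if $m_i$ is even and $0$ otherwise, so the entire expression collapses to
\begin{align*}
\sum_{\epsilon \in \{-1,1\}^k} \parens{\sum_{i=1}^k \epsilon_i x_i}^{2n}
 = 2^k \sum_{2m_1+\dots+2m_k = 2n} \binom{2n}{2m_1,\dots,2m_k} \prod_{i=1}^k x_i^{2m_i}.
\end{align*}
This gives an exact formula for the quantity we want to bound, namely
\begin{align*}
\sum_{2m_1+\dots+2m_k = 2n} \binom{2n}{2m_1,\dots,2m_k} \prod_{i=1}^k x_i^{2m_i}
 = \frac{1}{2^{k}} \sum_{\epsilon \in \{-1,1\}^k} \parens{\sum_{i=1}^k \epsilon_i x_i}^{2n}.
\end{align*}

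Next, since $2n$ is even, $(\epsilon\cdot x)^{2n} = ((-\epsilon)\cdot x)^{2n}$, so the terms for $\epsilon$ and $-\epsilon$ coincide and we can restrict to $\epsilon_1 = 1$, replacing the $2^{-k}$ factor by $2^{-(k-1)}$. Every such term $(\epsilon\cdot x)^{2n}$ is nonnegative because $2n$ is even, so I would simply discard all but the term $\epsilon = (1,\dots,1)$, which equals $\parens{\sum_i x_i}^{2n}$. This yields the stated inequality
\begin{align*}
\sum_{2m_1+\dots+2m_k = 2n} \binom{2n}{2m_1,\dots,2m_k} \prod_{i=1}^k x_i^{2m_i}
 \ge \frac{1}{2^{k-1}} \parens{\sum_{i=1}^k x_i}^{2n}.
\end{align*}

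There is no serious obstacle; the only thing to be careful about is ensuring that the dropped terms are nonnegative, which relies on the exponent being even, and on noting that $x_i \ge 0$ is not actually needed for that step (it is $2n$ being even that guarantees nonnegativity). The hypothesis $x_i \ge 0$ is used implicitly via the fact that the hypothesis is stated that way and because in the application the $x_i$'s will be square roots $\sqrt{\delta^{(i)}\delta^{(r-i)}}$, but the proof of the inequality itself works for any real $x_i$.
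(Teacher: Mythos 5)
Your proof is correct and follows essentially the same route as the paper: both use the sign-symmetrization identity $\sum_{\epsilon\in\{-1,1\}^k}\parens{\sum_i \epsilon_i x_i}^{2n} = 2^k\sum_{\text{even}}\binom{2n}{2m_1,\dots,2m_k}\prod_i x_i^{2m_i}$ and then discard all but the all-ones (respectively all-ones and all-minus-ones) nonnegative terms to extract $\frac{1}{2^{k-1}}\parens{\sum_i x_i}^{2n}$. The only differences are cosmetic (you justify the cancellation by factoring the sum over signs, and you note correctly that $x_i\geq 0$ is not actually needed), so there is nothing to fix.
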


\begin{proof}

Consider vectors $j\in \{-1,1\}^k$.

\begin{align*}\sum_j \parens{\sum_{i=1}^k j_ix_i}^{2n} = 2^k\sum_{2m_1+\dots 2m_k = 2n} \binom{2n}{2m_1,\dots, 2m_k}x_1^{2m_1}\dots x_k^{2m_k}.\end{align*} Here, each term in the right-hand sum is counted once for each of the $2^k$ choices of $j$.  Any term of the binomial expansion that does not correspond to an even partition of $2n$ is positive for exactly $2^{k-1}$ values of $j$ and negative for the other $2^{k-1}$.  In the sum over $j$ such a term cancels.

Also, \begin{align*}\sum_j \parens{\sum_{i=1}^k j_ix_i}^{2n} \geq 2\parens{\sum_{i=1}^k x_i}^{2n},\end{align*} because the left-hand side is a sum of non-negative quantities that contains the right hand side twice: when $j \equiv 1$ and $j\equiv -1$.  Combining we find \begin{align*}2^k\sum_{2m_1+\dots 2m_k = 2n} \binom{2n}{2m_1,\dots, 2m_k}x_1^{2m_1}\dots x_k^{2m_k} \geq 2\parens{\sum_{i=1}^k x_i}^{2n},\end{align*} the result immediately follows.

\end{proof}

Finally we are able to give a lower bound on $t_{2k}^{(r)}$.  We will prove the bound separately in the cases that $r$ is odd and $r$ is even: the difference is that when $r$ is even we must consider length change terms $p_i = 0$, corresponding to moves of type exactly $r/2$.

\begin{proof}{Proof of Theorem \ref{thm_tree}}

First, suppose $r$ is odd.  In this case, the number of accepted length change sequences is \begin{align*}
\sum_{n_1+n_3+\dots +n_{r} = k-1}\binom{2k-2}{2n_1,2n_3\dots, 2n_{r}}\prod_{j=1\atop j\text{ odd}}^{r}C_{n_j},\end{align*} where $n_j$ is the number of times types $j$ (and $-j$) appear in the length change sequence.  The total number of tree-like walks on $G^{(r)}$ that correspond to such sequences is
\begin{align*}
t_{2k}^{(r)}&\geq \sum_{n_1+n_3+\dots +n_{r} = k-1}\binom{2k-2}{2n_1,2n_3\dots, 2n_{r}}\prod_{j=1\atop j\text{ odd}}^{r}C_{n_j}\parens{\delta^{(\tfrac{r+j}{2})}}^{n_j}\parens{\delta^{(\tfrac{r-j}{2})}}^{n_j}\\
&=\parens{1+O\parens{\tfrac{1}{k}}} \sum_{n_1+n_3+\dots +n_{r} = k-1}\binom{2k-2}{2n_1,2n_3\dots, 2n_{r}}\prod_{j=1\atop j\text{ odd}}^{r}\frac{\parens{4\delta^{(\tfrac{r+j}{2})}\delta^{(\tfrac{r-j}{2})}}^{n_j}}{n_j^{3/2}\sqrt{\pi}}\\
& \geq \parens{1+O\parens{\tfrac{1}{k}}}\parens{\frac{r^3}{8k^3\sqrt{\pi}}}^{r/2}\sum_{n_1+n_3+\dots +n_{r} = k-1}\binom{2k-2}{2n_1,2n_3\dots, 2n_{r}}\parens{2\sqrt{\delta^{(\tfrac{r+j}{2})}\delta^{(\tfrac{r-j}{2})}}}^{2n_j}\\
& \geq \parens{1+O\parens{\tfrac{1}{k}}}\parens{\frac{r^3}{16k^3\sqrt{\pi}}}^{r/2}\parens{\sum_{j = 1\atop j\text{ odd}}^{r}2\sqrt{\delta^{(\tfrac{r+j}{2})}\delta^{(\tfrac{r-j}{2})}}}^{2k-2}\\
& = \parens{1+O\parens{\tfrac{1}{k}}}\parens{\frac{r^3}{16k^3\sqrt{\pi}}}^{r/2}\parens{\sum_{i = 0}^{r}\sqrt{\delta^{(i)}\delta^{(r-i)}}}^{2k-2}.
\end{align*}

Here, the first equality is a standard approximation for $C_n$ and the last inequality is the first statement of Lemma \ref{lemma_binom}.  The last equality is a result of re-indexing from length change $j$ to move type $i = \tfrac{r+j}{2}$.  The result of Theorem \ref{thm_tree} follows (though it remains to prove the theorem when $r$ is even): \begin{align*}\parens{t_{2k}^{(r)}}^{1/(2k)}\geq \parens{1-o(1)}\sum_{i = 0}^{r}\sqrt{\delta^{(i)}\delta^{(r-i)}}.\end{align*}

If instead $r$ is even, the computations are complicated by the existence of moves of type $r/2$.  The number of accepted move type sequences is \begin{align*}
\sum_{n_2 + n_4 + \dots +n_{r} = k-1-n_0/2}\binom{2k-2}{n_0,2n_2,2n_4,\dots  2n_{r}}\prod_{j=2\atop j\text{ even}}^{r}C_{n_j},\end{align*} where $n_j$ is the number of times $j$ (and $-j$) appear in the length change sequence.  Note that our sum condition requires that $n_0$ is even; in principle we could allow $n_0$ and the walk length $2k$ to both be odd (so, the walk length would be $2k+1$), which does not make a major difference to the computation or result.  Using almost the same argument as before, the number of tree-like walks on $G^{(r)}$ that correspond to such sequences is
\begin{align*}
t_{2k}^{(r)}&\geq \sum_{n_2 + n_4 + \dots +n_{r} \atop = k-1-n_0/2}\binom{2k-2}{n_0,2n_2,2n_4,\dots  2n_{r}}\parens{\prod_{j=2\atop j\text{ even}}C_{n_j}\parens{\delta^{(\tfrac{r+j}{2})}}^{n_j}\parens{\delta^{(\tfrac{r-j}{2})}}^{n_j}}\parens{\delta^{(r/2)}}^{n_{r/2}}\\
&=\parens{1+O\parens{\tfrac{1}{k}}} \sum_{n_2 + n_4 + \dots +n_{r} \atop = k-1-n_0/2}\binom{2k-2}{n_0,2n_2,2n_4,\dots  2n_{r}}\parens{\prod_{j=2\atop j\text{ even}}\frac{\parens{4\delta^{(\tfrac{r+j}{2})}\delta^{(\tfrac{r-j}{2})}}^{n_j}}{n_j^{3/2}\sqrt{\pi}}}\parens{\delta^{(r/2)}}^{n_{r/2}}\\
& \geq \parens{1+O\parens{\tfrac{1}{k}}}\parens{\frac{r^3}{8k^3\sqrt{\pi}}}^{r/2}\sum_{n_2 + n_4 + \dots +n_{r} \atop = k-1-n_0/2}\binom{2k-2}{n_0,2n_2,2n_4,\dots  2n_{r}}\\ &\parens{\prod_{j=2\atop j\text{ even}}\parens{2\sqrt{\delta^{(\tfrac{r+j}{2})}\delta^{(\tfrac{r-j}{2})}}}^{2n_j}}\parens{\delta^{(r/2)}}^{n_{r/2}}\\
& \geq \parens{1+O\parens{\tfrac{1}{k}}}\parens{\frac{r^3}{16k^3\sqrt{\pi}}}^{r/2}\parens{\delta^{(r/2)}+\sum_{j = 2\atop j\text{ odd}}^{r}2\sqrt{\delta^{(\tfrac{r+j}{2})}\delta^{(\tfrac{r-j}{2})}}}^{2k-2}\\
& = \parens{1+O\parens{\tfrac{1}{k}}}\parens{\frac{r^3}{16k^3\sqrt{\pi}}}^{r/2}\parens{\sum_{i = 0}^{r}\sqrt{\delta^{(i)}\delta^{(r-i)}}}^{2k-2}.
\end{align*}

As in the case where $r$ is odd, the method is to use the standard approximation on $C_n$ and then to cite Lemma \ref{lemma_binom} in order to express the sum as a binomial expansion.  The result of Theorem \ref{thm_tree} follows: \begin{align*}\parens{t_{2k}^{(r)}}^{1/(2k)}\geq \parens{1-o(1)}\sum_{i = 0}^{r}\sqrt{\delta^{(i)}\delta^{(r-i)}}.\end{align*}

\end{proof}

Now, combining Theorems \ref{thm_lambda} and \ref{thm_tree}, we see that \begin{align*} \lambda_2(G^{(r)})\geq \parens{1-o(1)}\sum_{i = 0}^{r}\sqrt{\delta^{(i)}\delta^{(r-i)}} = \parens{1-o(1)}(r+1)\hat{d}_r\,^{r/2}(G),\end{align*} where the $o$ notation is in terms of $k = \tfrac{1}{2}\lceil \text{diam}(G)/r \rceil$.  Under the hypotheses of Theorem \ref{abp}, we have $\text{diam}(G_n) = \omega(1)$ and $r_n = \epsilon\cdot\text{diam}(G_n)$, so that $k_n = \tfrac{1}{2}\lceil 1/\epsilon \rceil$.  Letting $\epsilon \to 0$, we have $k_n = \omega(1)$.  The result of Theorem \ref{abp} follows.

\subsection{Proof of Lemma \ref{lemma_girth}}
\begin{proof}

Per the hypotheses, let $G$ be a $d$-regular Ramanujan graph with girth $g$.  Assume that $2r < g$.

It is straightforward to compute that $\delta^{(i)} = (d-1)^i$ for all values $0\leq i \leq r$.  Our results show that  $\lambda_2(A^{(r)})\geq (1-o(1))(r+1)(d-1)^{r/2}$.  We will show that this is tight up to a factor $1+o_d(1)$.  To do so, we will find an upper bound for $\lambda_2(A^{(r)})$.
Because $G$ is isomorphic to a $d$-regular tree in any $r$-neighborhood, there is a recursive formula for $A^{(r)}$:

$$A^{(r)} = AA^{(r-1)}-(d-1)A^{(r-2)}$$ if $2\leq r$.  The base cases are $A^{(0)} = I$ and $A^{(1)} = A+I$.

We briefly justify this recursion:  $[AA^{(r-1)}]_{ij}$ counts the number of neighbors $v$ of $j$ satisfying $\dist_G(i,v)\leq r-1$.  Because of the girth bound, this number is $1$ if $\dist_G(i,j) = r$ or $r-1$.  It is $0$ if $\dist_G(i,j) > r$ and $d$ if $\dist_G(i,j) \leq r-2$.  Because $[A^{(r)}]_{ij} = 1$ iff $\dist_G(i,j)\leq r$, we subtract $(d-1)A^{(r-2)}$ from the previous term.

It is easy to see that there is a sequence of polynomials $p^{(r)}$ so that $A^{(r)} = p^{(r)}(A)$ (though it requires some effort to compute $p^{(r)}$.)  If $\lambda_1\geq \lambda_2\geq\dots$ are the eigenvalues of $A$, then $p^{(r)}(\lambda_1), p^{(r)}(\lambda_2),\dots$ are the eigenvalues of $A^{(r)}$ (but note that these values are not necessarily ordered.)  The largest eigenvalue of $A^{(r)}$ is $p^{(r)}(d)$, achieved by eigenvector $v\equiv 1$.  Because all other eigenvalues of $A^{(r)}$ are $p^{(r)}(\lambda)$ for some $|\lambda |< 2\sqrt{d-1}$, $$\lambda_2(A^{(r)}) \leq \max_{|x|< 2\sqrt{d-1}} |p^{(r)}(x)|.$$

We must now compute $p^{(r)}$.  Given the recursive formula $p^{(r)}(x) = xp^{(r-1)}(x) - (d-1)p^{(r-2)}(x)$ with base cases $p^{(0)}(x) = 1, p^{(1)}(x) = x+1$, it is straightforward to write the solution:  if $\abs{x}< 2\sqrt{d-1}$, \begin{align*}p^{(r)}(x) &= \parens{\frac{1}{2} - \frac{i}{2}\frac{x/2 + 1}{\sqrt{(d-1)-x^2/4}}}\parens{\frac{x}{2} + i\sqrt{(d-1)-x^2/4}}^r\\ &+\parens{\frac{1}{2} + \frac{i}{2}\frac{x/2 + 1}{\sqrt{(d-1)-x^2/4}}}\parens{\frac{x}{2} - i\sqrt{(d-1)-x^2/4}}^r\end{align*}

Define $\theta = arccos\parens{\frac{x/2}{\sqrt{d-1}}}$.

\begin{align*}
p^{(r)}(x) &= \frac{1}{2}\parens{1-i\frac{1}{\sqrt{d-1}\sin\theta}-i\frac{\cos\theta}{\sin\theta}}\parens{d-1}^{r/2}\parens{\cos\theta + i\sin\theta}^r\\ &+ \frac{1}{2}\parens{1+i\frac{1}{\sqrt{d-1}\sin\theta}+i\frac{\cos\theta}{\sin\theta}}\parens{d-1}^{r/2}\parens{\cos\theta - i\sin\theta}^r.\\
&= \frac{(d-1)^{r/2}}{2}\parens{1-i\frac{1}{\sqrt{d-1}\sin\theta}-i\frac{\cos\theta}{\sin\theta}}\parens{\cos(r\theta)+ i\sin(r\theta)}\\&+\frac{(d-1)^{r/2}}{2}\parens{1-i\frac{1}{\sqrt{d-1}\sin(-\theta)}-i\frac{\cos(-\theta)}{\sin(-\theta)}}\parens{\cos(-r\theta)+ i\sin(-r\theta)}\\
& = \parens{d-1}^{r/2}\parens{\cos(r\theta)+\frac{\sin(r\theta)}{\sqrt{d-1}\sin\theta}+ \frac{\cos\theta\sin(r\theta)}{\sin\theta}}.
\end{align*}

It follows that \begin{align*}\abs{p^{(r)}(x)}\leq \parens{d-1}^{r/2}\parens{1+ \frac{r}{\sqrt{d-1}} + r} = \parens{1+o_d(1)}(r+1)(d-1)^{r/2}.\end{align*}  Holding $r$ as a constant, our lower bound $\lambda_2(A^{(r)})= (1+o_d(1)) (r+1)d^{r/2}$ is tight.  This completes the proof.
\end{proof}

\subsection{Proof of Theorem \ref{thm_tr_cliques}}

\begin{proof}
First, we prove the upper bound on $\rho$.  Express $E(G) = E_1 \cup E_2$, where $E_2$ consists of edges corresponding to an $\leq r$-path passing through the $c$-clique of $T_{d,c}$ and $E_1$ consists of all other edges of $G$.  Write $G_1 = (V(G),E_1)$ and $G_2 = (V(G),E_2)$.

As before, $\max(\rho(G_1)+\rho(G_2) \leq \rho(G)\leq \rho(G_1)+\rho(G_2).$   Because of Theorem \ref{abp}, we have $\rho(G_1) = (1+o(1))(r+1)\sqrt{d}$.  It remains to compute $\rho(G_2)$.

If $x\sim_{G_2} y$, it must be the case that there are vertices $v,w$ within the copy of $K_c$ so that $\dist_G(x,v) + \dist_G(y,w) \leq r-1$, i.e., there is a path of length at most $r$ between $x$ and $y$, and that path passes through the copy of $K_c$.  Partition $E(G_2)$ into $F_q:0\leq q\leq r-1$, where $\{x,y\}\in F_q$ if $\{x,y\}\in E(G_2)$ and $\dist_T(x,K_c) + \dist_G(y,K_c) = q.$  Let $B_q$ be the adjacency matrix of $F_q$, clearly $\norm{A_2}\leq \sum_q \norm{B_q}$.

Now we will bound $\norm{B_q}$.  Suppose $\dist_T(x,K_c) = i$, then $x$ has at most $c\cdot d^{q-i}$ $E_q$-neighbors $y$, and no such neighbors if $i > q$.

Define $f_q$ so that $f_q(x) = \sqrt{d}^{\,q-i}$ if $\dist_T(x,K_c) = i$.  For such $x$,  $$\frac{Af(x)}{f(x)} \leq \frac{c\cdot d^{q-i}\sqrt{d}^i}{\sqrt{d}^{\,q-i}}= c\sqrt{d}^{\,q}.$$  It follows that $\norm{B_q}\leq c \sqrt{d}^{\,q}$, and thus $\norm{A_2}\leq \sum_{q = 0}^{r-1}c\sqrt{d}^{\,q} = (1+o(1))c\sqrt{d}^{\,r-1}$, this completes the proof of the upper bound.

We will now prove the lower bound on $\rho$.  Recall that $$\rho(G) \geq \parens{\frac{\langle A^kf,f\rangle}{\langle f,f\rangle}}^{1/k}$$ for any positive integer $k$ and $f$ satisfying $0 < \langle f,f\rangle <\infty$.  Let $2k$ be an even positive integer and set $f = 1_x$ for some $x$ that is a vertex of the $K_c$, so that $\rho(G)\geq \parens{t_{2k}(x)}^{1/2k}$.  We will exhibit two types of $k$-walks in $G$ terminating at $x$ in order to give a lower bound on $t_{2k}(x)$.

First, consider walks on $G$ that do not include any path that uses an edge of $r$.  This is equivalent to evaluating $t_{2k}$ on $T_d^{(r)}$, we have found in the proof of Theorem \ref{abp} that $(t_{2k})^{1/2k} \geq (1-o(1))(r+1)\sqrt{d}^{\,r}$.

Next, consider walks in which every odd-numbered step is an $r$-path that starts with a move in $K_c$ and then contains $r-1$ moves along the copy of $T_d$ attached to that vertex.  Every even-numbered step uses $r-1$ moves returning to the $K_c$ and then ends with a move in $K_c$.  The number of $2k$-walks of this type is $(c^2d^{r-1})^k$, and so $t_{2k}(x)^{1/2k}\geq \sqrt{c^2 d^{r-1}} = c \sqrt{d}^{\,r-1}$.  Combining these two counts of walks gives the lower bound.

\end{proof}

\subsection{Proof of Theorem \ref{thm_RR_tight}}

\begin{proof}
Recall from the proof of Theorem \ref{abp} that $\lambda_2(G^{(r)})^k$ is bounded below by $$\max_{f\perp f_1} \frac{\langle \parens{A^{(r)}}^kf,f\rangle}{\langle f,f\rangle},$$ where $k$ is any positive integer and where $f_1$ is the eigenfunction corresponding to $\lambda_1(G)$.  As $G$ is regular, $f_1 = \vec{1}$ with eigenvalue $\lambda_1 = d$.  As in the proof of the Alon-Boppana theorem, we will set a test-function $f = 1_x-1_y$ for two vertices $x$ and $y$.  In particular, we will choose $x,y$ to maximize $\dist_G(x,y)$ over all pairs for which $x$ is a vertex of the $c$-clique.  Observe that $\dist_G(x,y)\geq \tfrac{1}{2}\text{diam}(G)$  Choose $k$ even so that $kr< \dist_G(x,y) \leq(k+2)r$; as $n$ increases and $d$ remains constant, $k\to\infty$.  Then, after evaluating for $f$, $$\lambda_2(G^{(r)})^k \geq \tfrac{1}{2}\parens{[(A^{(r)})^k]_{xx}+[(A^{(r)})^k]_{yy}}.$$  It remains to find a lower bound on $[(A^{(r)})^k]_{xx}$ and $[(A^{(r)})^k]_{yy}$, i.e., the number of closed $k$-walks in $G^{(r)}$ terminating at $x$ and $y$ respectively.

For any vertex $v$ of $G^{(r)}$, the number of closed $k$-walks is WHP at least $$\parens{(1-o_k(1))(r+1)\sqrt{(d-c)d^{r-1}}}^k,$$ counting only those walks that don't use any edge of the $c$-clique.  This is an application of the  Alon-Boppana bound calculated in Theorem \ref{abp}; following the method of Lemma \ref{lemma_reg} after removing the edges of the $c$-clique, $WHP$ every vertex in $G^{(i)}:i\leq r$ has minimum degree $(1-o_k(1))(d-c)d^{i-1}$.  

Then, as in the proof of Theorem \ref{thm_tr_cliques}, we can also count a class of closed walks in $G^{(r)}$ ending at $v$ for which every step passes through the $c$-clique.  In such a walk, every odd-numbered step consists of an $G^{(r)}$-edge that corresponds to the sequence of a walk in $H$ followed by $r-1$ moves along the remaining edges, and the even-numbered step returns to $H$ and ends on any vertex of $H$.  The number of such walks is $\parens{(1-o_k(1))c^2(d-c)d^{r-2}}^{k/2} = \parens{(1-o_k(1))c\sqrt{d-c}\sqrt{d}^{\,r-2}}^k$, using our previous approximation of the degree in a random regular graph after powering.  

Using these bounds, we see that $$\lambda_2^k \geq \tfrac{1}{2}\parens{\parens{(1-o_k(1))c\sqrt{d-c}\sqrt{d}^{\,r-2}}^k+2\parens{(1-o_k(1))(r+1)\sqrt{(d-c)d^{r-1}}}^k},$$
it follows that $$\lambda_2\geq (1-o_{\diam(G^{(r)})}(1))\max(c,(r+1)\sqrt{d})\sqrt{d-c}\sqrt{d}^{\,r-2}.$$  The result is found by using the assumption $c = o(d)$.

\end{proof}

\bibliography{gen_sbm}
\bibliographystyle{plain}

\end{document}